\def\R{{\mathbb R}}
\newtheorem{thm}{Theorem}[section]
\newtheorem{lem}[thm]{Lemma}
\newtheorem{prop}[thm]{Proposition}
\theoremstyle{definition}
\newtheorem{de}[thm]{Definition}
\theoremstyle{remark}
\newtheorem{rem}[thm]{Remark}
\newtheorem{exam}[thm]{Example}
\numberwithin{equation}{section}
\begin{document}

\title[Existence, uniqueness and ergodicity for MVSDEs]
{Existence, uniqueness and ergodicity for McKean-Vlasov SDEs under distribution-dependent Lyapunov conditions}

\author{Zhenxin Liu}
\address{Z. Liu: School of Mathematical Sciences,
Dalian University of Technology, Dalian 116024, P. R. China}
\email{zxliu@dlut.edu.cn}

\author{Jun Ma}
\address{J. Ma (Corresponding author): School of Mathematics and Statistics,
Northeast Normal University, Changchun, 130024, P. R. China}
\email{mathmajun@163.com}


\date{}

\subjclass[2010]{60H10, 37H30, 60J60}

\keywords{McKean-Vlasov SDEs, distribution-dependent Lyapunov conditions,
ergodicity, Lions derivation}

\begin{abstract}
In this paper, we prove the existence and uniqueness of solutions as well as ergodicity for McKean-Vlasov SDEs under Lyapunov conditions,
in which the Lyapunov functions are defined on $\mathbb R^d\times \mathcal P_2(\mathbb R^d)$,
i.e. the Lyapunov functions depend not only on space variable but also on distribution variable.
It is reasonable and natural to consider distribution-dependent Lyapunov functions
since the coefficients depends on distribution variable.
We apply the martingale representation theorem and a modified Yamada-Watanabe theorem to obtain the existence and uniqueness of solutions.
Furthermore, the Krylov-Bogolioubov theorem is used to get ergodicity
since it is valid by linearity of the corresponding Fokker-Planck equations on $\mathbb R^d\times \mathcal P_2(\mathbb R^d)$.
In particular, if the Lyapunov function depends only on space variable,
we obtain exponential ergodicity for semigroup $P_t^*$ under Wasserstein quasi-distance.
Finally, we give some examples to illustrate our theoretical results.
\end{abstract}

\maketitle

\section{Introduction}
McKean-Vlasov stochastic differential equations (MVSDEs) are a special class of SDEs,
whose coefficients depend not only on the microcosmic site,
but also on the macrocosmic distribution of particles
\begin{equation}\label{main}
dX_t= b(t,X_t,\mathcal{L}_{X_t})dt+ \sigma(t,X_t,\mathcal{L}_{X_t})dW_t,
\end{equation}
where $\mathcal{L}_{X_t}$ denotes the law of $X_t$.
With the increasing demands on practical financial markets and social systems,
MVSDEs have drawn much attention.
One notable example is the impact of macrocosmic distribution on the rate of price changes in a financial market.

MVSDEs are also known as mean-field SDEs or distribution-dependent SDEs,
which are used to study the interacting particle systems and mean-field games.
Such SDEs were first studied by McKean \cite{Mckean} which showed the propagation of chaos in
physical systems of N-interacting particles related to the Boltzmann equations
and was inspired by Kac's work on the Vlasov kinetic equations \cite{Kac}.
Sznitman in \cite {Sznitman1,Sznitman2} proved the propagation of chaos
and the limit equation under the globally Lipschitz condition.
The limit equation can be described as an evolution equation
called the aforementioned MVSDE.
Owing to its importance and reality, MVSDEs are studied extensively.
Larsy and Lions \cite{Larsy-Lions1,Larsy-Lions2,Larsy-Lions3} introduced
mean-field games in order to study large population deterministic and stochastic differential
games which were independent of the work of Huang, Malham\'e and Caines \cite{HMC1,HMC2}.
The study of these two groups has significantly attracted the interest of more researchers to MVSDEs.
Wang \cite{Wang_18} obtained the well-posedness and exponential ergodicity under monotone conditions.
In \cite{RTW}, Ren et al. proved the
existence and uniqueness of solutions in infinite dimension under a Lyapunov condition.
In \cite{DQ}, Ding and Qiao established weak existence, pathwise uniqueness and optimal
strong convergence rate by the Euler-Maruyama approximation. In \cite{LM}, Liu and Ma showed the existence and uniqueness
of solutions and invariant measures under Lyapunov conditions which are different from that of \cite{RTW}.
Hong at al. \cite{HHL} proved the strong and weak well-posedness
for a class of McKean-Vlasov stochastic (partial) differential equations under locally monotone conditions.
Buckdahn et al. \cite{BLPR} investigated the relationship between the functional of the form $Ef(t, \bar X_t, \mathcal L_{X_t})$ and the associated second-order PDE, involving derivative with respect to the law.

This paper is dedicated to studying the existence and uniqueness of solutions as well as ergodicity
under Lyapunov conditions
in which the Lyapunov functions are defined on $\mathbb R^d\times \mathcal P_2(\mathbb R^d)$,
i.e. the Lyapunov functions depend not only on space variable but also on distribution variable.
For the definition of $\mathcal P_2(\mathbb R^d)$, see Section 2 for details.
Considering distribution-dependent Lyapunov functions is reasonable and natural
since the coefficients of MVSDEs depend on distribution variable.
However, the existing study under Lyapunov conditions for MVSDEs focuses on the space-dependent Lyapunov functions,
which are independent of distribution variable, such as \cite{BRS,RTW,LM}.
In \cite{BRS}, Bogachev at al. obtained the convergence in variation of probability solutions of
nonlinear Fokker-Planck equations to stationary solutions under space-dependent Lyapunov conditions.
In \cite{RTW}, Ren at al. showed the existence and uniqueness of solutions
for McKean-Vlasov stochastic partial differential equations under space-dependent Lyapunov conditions.
In \cite{LM}, Liu and Ma get the existence and uniqueness as well as exponential ergodicity for MVSDEs under space-dependent Lyapunov conditions.
In this paper, we adopt Khasminskii's approach \cite{Khasminskii}
to establish the existence and uniqueness of solutions, and employ the Krylov-Bogolioubov theorem to get ergodicity under distribution-dependent Lyapunov conditions,
in which the Lyapunov functions depend not only on space variable but also on distribution variable.

The first main result of this paper is  the existence and uniqueness of solutions for MVSDEs under distribution-dependent Lyapunov conditions.
Since the coefficients depend on the distribution which is a global property,
the classical truncation in the space variable does not work in this situation.
Following Ren et al. \cite{RTW}, we need to truncate the equation in both space and distribution variables simultaneously
to overcome this difficulty. However, this is not enough to obtain the existence of solutions here,
which is different from that of \cite{LM,RTW}. Therefore, the condition (H4) in Section 3 is needed
in order to guarantee that there exists a tight sequence on $C[0,T]$.
Thus, the existence of weak solutions follows by Skorokhod's representation theorem and the martingale representation theorem.
And applying a modified Yamada-Watanabe theorem (see \cite[Lemma 2.1]{HW}), we obtain the existence and uniqueness of solutions.
Moreover, the condition (H4) ensures that the solutions have finite second-order moments.
Consequently, It\^o's formula with Lions derivation holds since the Lions derivation is defined on $\mathcal P_2(\mathbb R^d)$.

The second main result of this paper is ergodicity for MVSDEs under distribution-dependent Lyapunov conditions in L\'evy-Prohorov distance.
We apply the Krylov-Bogolioubov theorem to obtain the existence of invariant measures,
which is valid since the corresponding Fokker-Planck equations on $\mathbb R^d\times \mathcal P_2(\mathbb R^d)$ are linear.
In order to illustrate linearity of the Fokker-Planck equations,
we introduce the operators $P_t$ acting on $C_b(\mathbb R^d\times \mathcal P_2(\mathbb R^d))$ and $P_t^*$ on $\mathcal P(\mathbb R^d\times \mathcal P_2(\mathbb R^d))$
which are semigroups and conjugate to each other;
see Section 4 for details.
Particularly, if the Lyapunov function
depends only on space variable, we get exponential ergodicity for semigroup $P_t^*$
under Wasserstein quasi-distance induced by the Lyapunov function.
Especially, this result reduces to Theorem 6.1 \cite{RRW} when the Lyapunov function is equal to the square of norm.

Considering ergodicity on the product space $\mathbb R^d\times \mathcal P_2(\mathbb R^d)$
leads to the linearity of the corresponding Fokker-Planck equations,
and the existence of semigroups of operators $P_t,P_t^*$ which are conjugate to each other.
However, if we restrict our analysis to $\mathbb R^d$ alone, these results do not hold.
Indeed, the Fokker-Planck equations on $\mathbb R^d$ are nonlinear and the operator $P_t$ is not a semigroup
due to their coefficients depending on the distribution; see \cite{Wang_18} for details. Consequently, the Krylov-Bogolioubov theorem is invalid in this situation. As a result, the classical method fails to establish the existence of invariant measures.
Therefore, Wang \cite{Wang_18} applied monotone conditions to obtain exponential ergodicity, which avoids this problem.
Similarly, \cite{BRS,LM} employed Lyapunov conditions to address this issue.
In this paper, we consider the corresponding Fokker-Planck equations on $\mathbb R^d\times \mathcal P_2(\mathbb R^d)$,
which are linear equations, to overcome this problem.
Moreover, to define semigroups $P_t$ and $P_t^*$, we consider the coupled SDEs,
which is similar to \cite{BLPR,RRW}.
The reason for considering these coupled SDEs is that
for given $f\in C_b(\mathbb R^d\times \mathcal P_2(\mathbb R^d))$,
$(x,\mu)\in \mathbb R^d\times \mathcal P_2(\mathbb R^d)$ in $P_tf(x,\mu)$ is not constrained to be a `pair',
meaning that $\mu$ represents an arbitrary probability measure and may not necessarily equal $\delta_x$.
By these coupled SDEs, we define the semigroups $P_t,P_t^*$, and obtain linearity of the Fokker-Planck equation.
Meanwhile, we get exponential ergodicity for $P_t^*$ under the Lyapunov condition,
in Wasserstein quasi-distance induced by the Lyapunov function.

The rest of this paper is arranged as follows.
In Section 2, we collect a number of preliminary results
concerning Lions derivation, chain rule, It\^o's formula and optimal transportation cost, etc.
Section 3 presents the existence and uniqueness of solutions under distribution-dependent Lyapunov conditions.
Section 4 establishes ergodicity
under distribution-dependent Lyapunov conditions in L\'evy-Prohorov distance,
and exponential ergodicity for semigroup $P_t^*$
under Lyapunov conditions in Wasserstein quasi-distance.
In Section 5, we provide some examples to illustrate our theoretical results.

\section{Preliminaries}

Throughout the paper, let $(\Omega, \mathcal F, \{\mathcal F_t\}_{t\ge 0},P)$ be a filtered complete probability space,
in which the filtration $\{\mathcal F_t\}$ is assumed to satisfy the usual condition, i.e. it is right continuous and $\mathcal F_0$ contains all $P$-null sets. Let $W$ be an $n$-dimensional Brownian motion defined on $(\Omega, \mathcal F, \{\mathcal F_t\}_{t\ge 0}, P)$.  We denote by $B^\top$ the transpose of matrix $B\in \mathbb R^{n_1\times n_2}$ with $n_1,n_2\geq1$, $tr(B)$ the trace of $B$ and $|B|:=\sqrt {tr(B^\top B)}$ the norm of $B$.
$\mathcal P(\mathbb R^d)$ denotes the space of probability measures on $\mathbb R^d$ and $\mathcal P_{p}(\mathbb R^d):= \{\mu\in \mathcal P(\mathbb R^d): \int_{\mathbb R^d}|x|^p\mu(dx)<\infty\}$ with $p\ge 1$. Let us introduce some basic notations as follows.

The MVSDE can be described as follows:
\begin{align}
\left\{
\begin{aligned}\label{MVSDE}
&dX_t= b(t,X_t,\mathcal{L}_{X_t})dt+ \sigma(t,X_t,\mathcal{L}_{X_t})dW_t\\
&X_0= \xi,
\end{aligned}
\right.
\end{align}
where $b:[0,\infty) \times \mathbb R^d \times \mathcal P (\mathbb R ^d)\to \mathbb R^d$, $\sigma:[0,\infty) \times \mathbb R^d \times \mathcal P (\mathbb R ^d)\to \mathbb R^{d\times n}$, and
$X_0$ is $\mathcal F_0$-measurable and satisfies some integrable condition to be specified below.
We next introduce the infinitesimal generator $\mathcal L$ of the above MVSDE, which involves Lions derivation, differentiable space and It\^o's formula.
The definition of Lions derivation is brought by Lions \cite{Lions}, which is revised by Cardaliaguet \cite{Cardaliaguet}.
And the differentiable space $C^{(1,1)}(\mathcal P_2(\mathbb R^d))$ is called partially $C^2$ in Chassagneux at al. \cite{CCD}
\begin{de}[Lions derivation]
A function $f:\mathcal P_2(\mathbb R^d)\to\mathbb R$ is called differentiable at $\mu\in\mathcal P_2(\mathbb R^d)$, if there exists a random variable $X\in L^2(\Omega)$ with $\mu=\mathcal L_X$ such that $\tilde f(X):=f(\mathcal L_X)$ and $\tilde f$ is Fr\'echet differentiable at $X$.
$f$ is called differentiable on $\mathcal P_2(\mathbb R^d)$ if $f$ is differentiable at any $\mu\in\mathcal P_2(\mathbb R^d)$.
\end{de}
\begin{de}
The space $C^{(1,1)}(\mathcal P_2(\mathbb R^d))$ contains the functions $f:\mathcal P_2(\mathbb R^d)\to\mathbb R$
satisfying the following conditions:
(i) $f$ is differentiable and its deviation $\partial_{\mu} f(\mu)(y)$ has a continuous version in $(\mu,y)$, still denoted $\partial_{\mu} f(\mu)(y)$;
(ii) the map $(\mu,y)\mapsto \partial_{\mu} f(\mu)(y)$ is locally bounded;
(iii) $(\mu,y)\mapsto \partial_{\mu} f(\mu)(y)$ is jointly continuous at any $(\mu,y)$ with $y\in supp(\mu)$;
(iv) $\partial_{\mu} f(\mu)(\cdot)$ is continuously differentiable for any $\mu$,
its derivation $\partial_y \partial_{\mu} f(\mu)(y)$ is locally bounded in $(\mu,y)$
and is jointly continuous at any $(\mu,y)$ with $y\in supp(\mu)$.
\end{de}
We then introduce the chain rule which is established based on Chassagneux at al. \cite[Theorem 16]{CCD},
in conjunction with the argument from \cite{BLPR}.
And see \cite{BLPR} for details involving copy in the following lemma.
\begin{lem}[Chain rule]
Let $b,\sigma$ be progressively measurable with
\begin{equation}\label{ConditionIto}
E\int_0^T|b(s)|^2+|\sigma(s)|^4dt<\infty, \quad T\in[0,\infty].
\end{equation}
Then for any $f\in C^{(1,1)}(\mathcal P_2(\mathbb R^d))$ with
$$
\sup_{\mu\in K}\int|\partial_{\mu} f(\mu)(y)|^2+ |\partial_y \partial_{\mu} f(\mu)(y)|^2\mu(dy)<\infty
$$
where $K\subset \mathcal P_2(\mathbb R^d)$ is a compact set,
we have
\begin{align*}
f(\mathcal L_{X_t})=&f(\mathcal L_{X_0})+\tilde E\int_0^t b(s,\tilde X_s,\mathcal L_{X_s})\cdot\partial_{\mu} f(\mathcal L_{X_s})(\tilde X_s)\\
&+
\frac 12tr\big((\sigma\sigma^{\top})(s,\tilde X_s,\mathcal L_{X_s})\cdot\partial_y \partial_{\mu} f(\mathcal L_{X_s})(\tilde X_s)\big)ds.
\end{align*}
Here $\tilde X_s$ denotes the solution defined on another probability space $(\tilde\Omega, \tilde {\mathcal F},\tilde P)$
which is a copy of $(\Omega, \mathcal F, P)$,
$\tilde E$ denotes the expectation on $(\tilde\Omega, \tilde {\mathcal F},\tilde P)$.
\end{lem}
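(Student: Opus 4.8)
The plan is to read the left-hand side as $f(\mu_t)$ along the marginal flow $\mu_t:=\mathcal L_{X_t}$, which is the law of the It\^o process $X$, and to invoke the chain rule for flows of measures of Chassagneux et al. \cite[Theorem 16]{CCD}. That result gives $f(\mu_t)=f(\mu_0)+\int_0^t\int_{\mathbb R^d}\big[\partial_\mu f(\mu_s)(y)\cdot b(s,y,\mu_s)+\frac12\mathrm{tr}\big((\sigma\sigma^\top)(s,y,\mu_s)\,\partial_y\partial_\mu f(\mu_s)(y)\big)\big]\mu_s(dy)\,ds$, and the only gap to the stated formula is notational: following \cite{BLPR} one introduces an independent copy $\tilde X_s$ on $(\tilde\Omega,\tilde{\mathcal F},\tilde P)$ with $\mathcal L_{\tilde X_s}=\mu_s$ and rewrites the inner spatial integral $\int_{\mathbb R^d}g(y)\mu_s(dy)$ as the copy-expectation $\tilde E[g(\tilde X_s)]$. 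This is exactly the role played by the tilde space in the statement.

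Before applying the formula I would verify its two structural requirements. First, $s\mapsto\mu_s$ must be continuous in $\mathcal P_2(\mathbb R^d)$: from $dX_t=b\,dt+\sigma\,dW_t$ and the moment bound \eqref{ConditionIto}, the drift increment contributes at most $|t-s|\,E\int_s^t|b(r)|^2dr$ by Cauchy-Schwarz and the diffusion increment at most $E\int_s^t|\sigma(r)|^2dr$ by the It\^o isometry, so $E|X_t-X_s|^2\to0$ as $s\to t$ and $t\mapsto\mu_t$ is continuous in the $2$-Wasserstein distance. Consequently the image $K:=\{\mu_s:s\in[0,t]\}$ is a compact subset of $\mathcal P_2(\mathbb R^d)$, which is precisely the set on which the hypothesis supplies the uniform bound $\sup_{\mu\in K}\int(|\partial_\mu f(\mu)(y)|^2+|\partial_y\partial_\mu f(\mu)(y)|^2)\mu(dy)<\infty$, so the derivative terms stay controlled along the whole trajectory. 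Second, each term on the right-hand side is integrable: by Cauchy-Schwarz the first-order term is dominated, after taking $\tilde E$ and integrating in $s$, by $\big(\int_0^t\tilde E|b(s)|^2ds\big)^{1/2}\big(\int_0^t\tilde E|\partial_\mu f(\mu_s)(\tilde X_s)|^2ds\big)^{1/2}$, and the second-order term similarly by $\big(\int_0^t\tilde E|\sigma(s)|^4ds\big)^{1/2}\big(\int_0^t\tilde E|\partial_y\partial_\mu f(\mu_s)(\tilde X_s)|^2ds\big)^{1/2}$. This is the precise reason the hypothesis imposes a fourth moment on $\sigma$: the second-order coefficient $\sigma\sigma^\top$ carries $|\sigma|^2$, whose square is $|\sigma|^4$.

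With these checks the identity follows by a standard approximation. I would first establish it for smooth, bounded, compactly supported $b,\sigma$ and for $f$ a smooth cylinder function on $\mathcal P_2(\mathbb R^d)$, where the measure-flow It\^o formula of \cite{CCD} applies directly, and then pass to the general case by mollifying and truncating the coefficients and approximating $f$. The limit is justified through the joint continuity and local boundedness of $\partial_\mu f$ and $\partial_y\partial_\mu f$ from the definition of $C^{(1,1)}(\mathcal P_2(\mathbb R^d))$, together with the dominating bounds from \eqref{ConditionIto} and the uniform derivative estimate over $K$, by dominated convergence applied to both the time integral and the copy-expectation $\tilde E$.

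I expect the main obstacle to be this limiting step. The derivatives $\partial_\mu f(\mu)(y)$ are fixed only $\mu$-almost everywhere, and the $C^{(1,1)}$ regularity guarantees joint continuity only at points $y\in\mathrm{supp}(\mu)$; making sure that $\partial_\mu f$ and $\partial_y\partial_\mu f$ evaluated along the approximating flows and copies converge to the correct quantities, while the trajectory is kept inside a fixed compact $K\subset\mathcal P_2(\mathbb R^d)$, requires care with these version and support issues. Controlling this convergence uniformly enough to interchange the limit with both the $ds$-integral and $\tilde E$ is the technical heart of the argument; once \cite[Theorem 16]{CCD} is available, the remaining work is bookkeeping.
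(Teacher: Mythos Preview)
Your proposal is correct and follows precisely the approach the paper indicates: the paper does not give its own proof of this lemma but states that it ``is established based on Chassagneux at al.\ \cite[Theorem 16]{CCD}, in conjunction with the argument from \cite{BLPR},'' which is exactly your plan of invoking the measure-flow chain rule of \cite{CCD} and then rewriting the spatial integral against $\mu_s$ as a copy-expectation $\tilde E[\,\cdot\,(\tilde X_s)]$ in the style of \cite{BLPR}. Your verification of the $W_2$-continuity of $s\mapsto\mu_s$ (so that $K=\{\mu_s:s\in[0,t]\}$ is compact and the derivative hypothesis applies) and of the integrability via Cauchy--Schwarz with the fourth-moment bound on $\sigma$ fills in exactly the details the paper leaves to the cited references.
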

We next give the definition of differentiable space $C^{2,(1,1)}(\mathbb R^d\times \mathcal P_2(\mathbb R^d))$
and introduce It\^o's formula on $\mathbb R^d\times \mathcal P_2(\mathbb R^d)$,
which is obtained by the approach \cite{BLPR} explaining the transition from It\^o's formula for functions of measures only, i.e. chain rule, to the general case.
\begin{de}
The space $C^{2,(1,1)}(\mathbb R^d\times \mathcal P_2(\mathbb R^d))$ contains the function $f:\mathbb R^d\times\mathcal P_2(\mathbb R^d)\to\mathbb R$
satisfying the following conditions:
$f(\cdot, \mu)\in C^2(\mathbb R^d)$ for any $\mu\in\mathcal P_2(\mathbb R^d)$,
and $f(x,\cdot)\in C^{(1,1)}(\mathcal P_2(\mathbb R^d))$ for any $x\in \mathbb R^d$.
\end{de}
\begin{prop}[It\^o's formula]
Assume
\begin{equation*}
E\int_0^T|b(s)|^2+|\sigma(s)|^4dt<\infty, \quad T\in[0,\infty),
\end{equation*}
and $V\in C^{2,(1,1)}(\mathbb R^d\times \mathcal P_2(\mathbb R^d))$ with
$$
\sup_{\mu\in K}\int|\partial_{\mu} V(x,\mu)(y)|^2+ |\partial_y \partial_{\mu}V(x,\mu)(y)|^2\mu(dy)<\infty,
$$
where $K\subset \mathcal P_2(\mathbb R^d)$ is a compact set.
Then we have
\begin{align*}
V(X_t,\mathcal L_{X_t})
=&{}
V(X_0,\mathcal L_{X_0})+ \int_0^t\bigg(b(s,X_s,\mathcal L_{X_s})\cdot \partial_x V(X_s,\mathcal L_{X_s})\\
&\quad + \frac 12tr((\sigma\sigma^{\top})(s,X_s,\mathcal L_{X_s})\cdot\partial_x^2 V(X_s,\mathcal L_{X_s}))\bigg) ds\\
&+{}
\tilde E\int_0^t \bigg(b(s,\tilde X_s,\mathcal L_{X_s})\cdot\partial_{\mu} V(X_s,\mathcal L_{X_s})(\tilde X_s)\\
&\quad+{}
\frac 12tr((\sigma\sigma^{\top})(s,\tilde X_s,\mathcal L_{X_s})\cdot\partial_y \partial_{\mu} V(X_s,\mathcal L_{X_s})(\tilde X_s))\bigg)ds\\
&+{}
\int_0^t\sigma(s,X_s,\mathcal L_{X_s})\cdot \partial_x V(X_s,\mathcal L_{X_s})dW_s\\
=&:{}V(X_0,\mathcal L_{X_0})+ \int_0^t \mathcal LV(X_s,\mathcal L_{X_s})ds+\int_0^t\sigma(s,X_s,\mathcal L_{X_s})\cdot \partial_x V(X_s,\mathcal L_{X_s})dW_s,
\end{align*}
where
\begin{align}\label{Itoformula}
\mathcal LV(X_s,\mathcal L_{X_s}):={}&b(s,X_s,\mathcal L_{X_s})\cdot \partial_x V(X_s,\mathcal L_{X_s})+ \frac 12tr((\sigma\sigma^{\top})(s, X_s,\mathcal L_{X_s})\cdot\partial_x^2 V(X_s,\mathcal L_{X_s}))\\\nonumber
&+{} \tilde E \bigg[b(s,\tilde X_s,\mathcal L_{X_s})\cdot\partial_{\mu} V(X_s,\mathcal L_{X_s})(\tilde X_s)\\\nonumber
&\quad{} + \frac 12tr((\sigma\sigma^{\top})(s,\tilde X_s,\mathcal L_{X_s})\cdot\partial_y \partial_{\mu} V(X_s,\mathcal L_{X_s})(\tilde X_s)\bigg].
\end{align}
\end{prop}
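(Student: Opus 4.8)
The plan is to follow the discretization approach of Buckdahn et al. \cite{BLPR}, combining the classical finite-dimensional It\^o formula in the space variable $x$ with the chain rule of the preceding Lemma in the measure variable $\mu$. A preliminary reduction is in order: under the integrability hypothesis $E\int_0^t |b(s)|^2 + |\sigma(s)|^4\,ds < \infty$ the paths of $X$ are continuous and the second moments are uniformly bounded, so $s\mapsto \mathcal L_{X_s}$ is continuous from $[0,t]$ into $\mathcal P_2(\mathbb R^d)$ and its image $K:=\{\mathcal L_{X_s}:s\in[0,t]\}$ is a compact subset of $\mathcal P_2(\mathbb R^d)$. On this fixed compact $K$ the local boundedness built into $C^{2,(1,1)}(\mathbb R^d\times\mathcal P_2(\mathbb R^d))$, together with the assumed bound $\sup_{\mu\in K}\int |\partial_\mu V(x,\mu)(y)|^2 + |\partial_y\partial_\mu V(x,\mu)(y)|^2\,\mu(dy) < \infty$, upgrades to genuine uniform bounds; this is precisely what will make the increments below controllable in the limit.

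First I would fix a partition $0=t_0<t_1<\cdots<t_N=t$ with mesh tending to $0$ and write the telescoping sum
\[
V(X_t,\mathcal L_{X_t}) - V(X_0,\mathcal L_{X_0}) = \sum_{k=0}^{N-1}\Big[V(X_{t_{k+1}},\mathcal L_{X_{t_{k+1}}}) - V(X_{t_k},\mathcal L_{X_{t_k}})\Big],
\]
splitting each increment by freezing one variable at a time,
\[
V(X_{t_{k+1}},\mathcal L_{X_{t_{k+1}}}) - V(X_{t_k},\mathcal L_{X_{t_k}}) = \underbrace{\Big[V(X_{t_{k+1}},\mathcal L_{X_{t_{k+1}}}) - V(X_{t_k},\mathcal L_{X_{t_{k+1}}})\Big]}_{(\mathrm I)_k} + \underbrace{\Big[V(X_{t_k},\mathcal L_{X_{t_{k+1}}}) - V(X_{t_k},\mathcal L_{X_{t_k}})\Big]}_{(\mathrm{II})_k}.
\]
To $(\mathrm I)_k$ I apply the classical It\^o formula to the $C^2(\mathbb R^d)$ function $x\mapsto V(x,\mathcal L_{X_{t_{k+1}}})$ with the measure argument frozen at $\mathcal L_{X_{t_{k+1}}}$, over $[t_k,t_{k+1}]$; summing over $k$ this produces the first-order term $b\cdot\partial_x V$, the second-order term $\tfrac12\mathrm{tr}((\sigma\sigma^\top)\partial_x^2 V)$, and the stochastic integral $\int\sigma\cdot\partial_x V\,dW_s$, albeit with the measure evaluated at the right endpoint rather than at the running time $s$. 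To $(\mathrm{II})_k$, with $x=X_{t_k}$ frozen, I apply the chain rule of the Lemma to $\mu\mapsto V(X_{t_k},\mu)\in C^{(1,1)}(\mathcal P_2(\mathbb R^d))$ on $[t_k,t_{k+1}]$, which yields exactly $\tilde E\int_{t_k}^{t_{k+1}}\big(b\cdot\partial_\mu V+\tfrac12\mathrm{tr}((\sigma\sigma^\top)\partial_y\partial_\mu V)\big)\,ds$ evaluated at $x=X_{t_k}$.

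Finally I would let the mesh tend to zero and identify the limits with the integrals in the statement. The main obstacle is the error control in this passage to the limit: I must show that replacing the frozen measure $\mathcal L_{X_{t_{k+1}}}$ in $(\mathrm I)_k$ and the frozen point $X_{t_k}$ in $(\mathrm{II})_k$ by the running $\mathcal L_{X_s}$ and $X_s$ introduces errors that vanish as $N\to\infty$. This rests on the joint continuity and uniform boundedness of $\partial_x V,\partial_x^2 V,\partial_\mu V,\partial_y\partial_\mu V$ over the compact set $K$, on the $\mathcal P_2$-continuity of $s\mapsto\mathcal L_{X_s}$, and on the integrability of $b,\sigma$, which together force the Riemann sums to converge in $L^1$ and the stochastic-integral approximations to converge via It\^o's isometry. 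A secondary technical point is the measurability and integrability needed to apply the chain rule with the random but $\mathcal F_{t_k}$-measurable frozen point $x=X_{t_k}$ and to interchange the outer expectation with the tilde-expectation $\tilde E$ on the copy space; these I would handle by conditioning on $\mathcal F_{t_k}$ and invoking Fubini's theorem in conjunction with the uniform bounds on $K$.
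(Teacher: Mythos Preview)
Your proposal is correct and follows precisely the approach the paper itself invokes: the paper does not supply an independent proof of this proposition but simply states that it ``is obtained by the approach \cite{BLPR} explaining the transition from It\^o's formula for functions of measures only, i.e.\ chain rule, to the general case.'' Your discretization via a telescoping sum, freezing alternately the measure and the space variable, applying the classical It\^o formula to $(\mathrm I)_k$ and the chain rule of the preceding Lemma to $(\mathrm{II})_k$, and then passing to the limit using the compactness of $\{\mathcal L_{X_s}:s\in[0,t]\}$ in $\mathcal P_2(\mathbb R^d)$, is exactly the Buckdahn--Li--Peng--Rainer scheme the paper defers to.
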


In the time homogeneous case, i.e. $b$ and $\sigma$ do not depend on $t$, a measure $\mu \in \mathcal P(\mathbb R^d)$ is said to be an invariant measure for MVSDE \eqref{MVSDE} if $\mathcal L_{X_t}= \mu$ for all $t\geq 0$, and the equation is said to be ergodic if there exists $\mu_{\mathcal I} \in \mathcal P(\mathbb R^d)$ such that $\mathcal L_{X_t}\to \mu_{\mathcal I}$ weakly as $t\to\infty$.
It is obvious that ergodicity implies the uniqueness of invariant measure.

Define Wasserstein distance $W_p$ on $\mathcal P_p(\R^d)$ with $p\ge 1$ as follows:
$$W_p(\mu, \nu):=\inf_{\pi \in \mathcal C(\mu, \nu)} \bigg(\int_{\mathbb R^d\times\R^d} |x-y|^p \pi(dx, dy)\bigg)^{1/p}$$
for $\mu, \nu\in \mathcal P_p(\R^d)$,
where $\mathcal C(\mu, \nu)$ is the set of couplings between $\mu$ and $\nu$. As usual, we also denote $\mu(f):=\int_{\R^d} f(x)\mu(dx)$ in what follows for any function $f$ defined on $\R^d$ and $\mu\in\mathcal P(\R^d)$.

We also define an another distance $\omega:\mathcal P(\mathbb R^d)\times\mathcal P(\mathbb R^d)\to\mathbb R^+$,
which is called L\'evy-Prohorov distance, by
$$
\omega(\mu,\nu)=\inf\{\delta:\mu(A)<\nu(A^{\delta})+\delta,\nu(A)<\mu(A^{\delta})+\delta \quad \hbox{\text{for} }\hbox{\text{all} } \hbox{\text{closed} } \hbox{\text{set} } A\in\mathcal F\},
$$
where $A^{\delta}=\{x: \exists y\in A ~\hbox{\text{such} } \hbox{\text{that} }~|x-y|<\delta\}$.
Convergence under the L\'evy-Prohorov distance is equivalent to the weak convergence of measures, see for instance \cite[Theorem 1.11]{Prohorov}.
And we present a proposition that characterizes the relationship between L\'evy-Prohorov and Wasserstein distance
on $\mathcal P(\mathbb R^d)$ as follows; see \cite[Lemma 5.3, Theorem 5.5]{Chen} or \cite[Theorem 6.9]{Villani}.
\begin{prop}\label{Ww}
For any $\mu,\nu\in\mathcal P(\mathbb R^d)$, we have
\begin{enumerate}
\item $W_p(\mu,\nu)\ge \omega(\mu,\nu)^{1+\frac 1p}.$
\item A subset $\mathcal M\subset\mathcal P(\mathbb R^d)$ is compact in $W_p$ if and only if
$\mathcal M$ is weakly compact, and
\begin{align*}
\lim_{N\to\infty}\sup_{\mu\in\mathcal M}\int_{\{x:|x-x_0|>N\}}|x-x_0|^p\mu(dx)=0,
\end{align*}
for some $x_0\in\mathbb R^d$.
\end{enumerate}
\end{prop}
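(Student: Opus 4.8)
The plan is to prove the two parts separately: part (1) by a direct coupling estimate combined with Markov's inequality and an optimal choice of scale, and part (2) by invoking Prohorov's theorem together with the standard equivalence between $W_p$-convergence and the conjunction of weak convergence and convergence of $p$-th moments.

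First I would address part (1) starting from the coupling definition of $W_p$. Fix any coupling $\pi\in\mathcal C(\mu,\nu)$, any closed set $A$, and any $\delta>0$. Writing $\mu(A)=\pi(A\times\mathbb R^d)$ and splitting according to whether the second coordinate lies in $A^\delta$, every point with $x\in A$ and $y\notin A^\delta$ satisfies $|x-y|\ge\delta$, so
\[
\mu(A)\le \nu(A^\delta)+\pi\big(\{|x-y|\ge\delta\}\big)\le \nu(A^\delta)+\delta^{-p}\int_{\mathbb R^d\times\mathbb R^d}|x-y|^p\,\pi(dx,dy).
\]
Taking the infimum over $\pi$ turns the last integral into $W_p(\mu,\nu)^p$, and the same bound holds with $\mu$ and $\nu$ interchanged. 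The decisive step is to choose $\delta=W_p(\mu,\nu)^{p/(p+1)}$, for which $\delta^{-p}W_p(\mu,\nu)^p=\delta$; then both inequalities defining the L\'evy-Prohorov distance hold at level $\delta$, whence $\omega(\mu,\nu)\le W_p(\mu,\nu)^{p/(p+1)}$. Raising to the power $(p+1)/p$ gives exactly $W_p(\mu,\nu)\ge\omega(\mu,\nu)^{1+1/p}$.

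For part (2) the key input I would use is that on $\mathcal P_p(\mathbb R^d)$ one has $W_p(\mu_n,\mu)\to0$ if and only if $\mu_n\to\mu$ weakly and $\int|x|^p\mu_n(dx)\to\int|x|^p\mu(dx)$. Granting this, the substantive (reverse) implication runs as follows: assume $\mathcal M$ is weakly compact and satisfies the stated uniform tail integrability. Given any sequence in $\mathcal M$, weak compactness (equivalently tightness, via Prohorov's theorem) yields a weakly convergent subsequence $\mu_{n_k}\to\mu$; the uniform integrability of $|x-x_0|^p$ upgrades this to convergence of $p$-th moments, and the equivalence then gives $W_p(\mu_{n_k},\mu)\to0$. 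Hence every sequence admits a $W_p$-convergent subsequence, so $\mathcal M$ is $W_p$-compact. For the forward implication, $W_p$-compactness forces weak compactness because, by part (1), $W_p$ dominates a power of the L\'evy-Prohorov distance and therefore $W_p$-convergence implies weak convergence; the uniform tail integrability is then obtained by contradiction, extracting from any violating sequence a $W_p$-convergent subsequence along which the $p$-th moments converge, which is incompatible with non-vanishing uniform tails.

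The hard part will be the moment-convergence equivalence underlying part (2): showing that weak convergence together with convergence of the $p$-th moments implies $W_p$-convergence is the genuine analytic core, requiring a truncation/uniform-integrability argument applied to an optimal or near-optimal coupling. By contrast, the coupling estimate in part (1) and the Prohorov-theorem bookkeeping in part (2) are routine. Accordingly I would quote this equivalence from the cited sources rather than reproving it from scratch.
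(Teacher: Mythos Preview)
The paper does not give its own proof of this proposition; it simply cites \cite[Lemma 5.3, Theorem 5.5]{Chen} and \cite[Theorem 6.9]{Villani}. Your argument is correct and is essentially the standard one found in those references: the coupling/Markov inequality trick with the optimal scale $\delta=W_p(\mu,\nu)^{p/(p+1)}$ for part (1), and for part (2) the combination of Prohorov's theorem with the characterization of $W_p$-convergence as weak convergence plus convergence of $p$-th moments. So you have in fact supplied more detail than the paper itself.

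Two minor points worth tightening. In part (1), the paper's definition of $\omega$ uses strict inequalities $\mu(A)<\nu(A^\delta)+\delta$, while your estimate yields $\le$; this is harmless since any $\delta'>\delta$ then satisfies the strict inequalities, so the infimum is unchanged, but it is worth a one-line remark. In the forward implication of part (2), when you derive the uniform tail condition by contradiction, make explicit that $W_p$-convergence of the extracted subsequence forces uniform integrability of $|x-x_0|^p$ along that subsequence (this is exactly the content of \cite[Theorem 6.9]{Villani}), which clashes with tails of size at least $\varepsilon$ beyond radius $n_k\to\infty$. With these small clarifications your write-up would stand on its own.
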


Let $\rho:(\mathbb R^d\times\mathcal P_2(\mathbb R^d))\times(\mathbb R^d\times\mathcal P_2(\mathbb R^d)) \to \mathbb R^+$ be a distance-like function satisfying $x=y,\mu=\nu$ if and only if $\rho((x,\mu),(y,\nu))=0$.
Let $\mathcal P(\mathbb R^d\times\mathcal P_2(\mathbb R^d))$ denote the space of all probability measures on $\mathbb R^d\times\mathcal P_2(\mathbb R^d)$.
We now introduce the Wasserstein quasi-distance based on $\rho$. For any $\mu, \nu \in \mathcal P_{\rho}(\mathbb R^d\times\mathcal P_2(\mathbb R^d)):=\{\Lambda\in \mathcal P(\mathbb R^d\times\mathcal P_2(\mathbb R^d)): \int\rho((x,\mu),(0,\delta_0))\Lambda(dx,d\mu)<\infty\}$, let
\begin{align*}
W_{\rho}(\mu, \nu):=\inf_{\pi \in \mathcal C(\mu, \nu)} \int_{(\mathbb R^d\times\mathcal P_2(\mathbb R^d)) \times (\mathbb R^d\times\mathcal P_2(\mathbb R^d))}\rho(x,y) \pi(dx, dy)=\inf E \rho(X, Y),
\end{align*}
where $\mathcal C(\mu, \nu)$ is the set of couplings between $\mu$ and $\nu$, and the second infimum is taken over all random variables $X, Y$ on $\mathbb R^d\times \mathcal P_2(\mathbb R^d)$ whose laws are $\mu, \nu$ respectively.
In general, $W_{\rho}$ is not a distance since the triangle inequality may not hold.
But it is complete in the sense that any $W_{\rho}$--Cauchy sequence in $\mathcal P_{\rho}(\mathbb R^d\times\mathcal P_2(\mathbb R^d))$ is convergent,
i.e. for any Cauchy sequence $\{\mu_n\} \subset \mathcal P_{\rho}(\mathbb R^d\times\mathcal P_2(\mathbb R^d))$,
there exists a measure $\mu \in \mathcal P_{\rho}(\mathbb R^d\times\mathcal P_2(\mathbb R^d))$ such that $W_{\rho}(\mu_n, \mu) \to 0$ as $n\to \infty$.
When $\big((x,\mu),(y,\nu)\big)\mapsto\rho\big((x,\mu),(y,\nu)\big)$ is a distance,
$W_{\rho}$ satisfies the triangle inequality and is hence a distance on $\mathcal P_{\rho}$.
This should not cause confusion with $W_p$ and $\mathcal P_p(\mathbb R^d)$ introduced above.
\section{Existence and uniqueness of solutions}

In this section, we consider the existence and uniqueness of solutions for MVSDEs under distribution-dependent Lyapunov conditions.
In order to obtain the desired results, we get a tight sequence by the Lyapunov condition,
and then apply the martingale representation theorem as well as a modified Yamada-Watanabe theorem.
Now, we introduce some assumptions for equation \eqref{MVSDE}.
\begin{enumerate}

\item [(H1)]For any $N\geq 1$, there exists a constant $ C_N\geq 0 $ such that for any $ |x|, |y|\leq N$ and  ${\rm supp} \mu, {\rm supp} \nu \subset B(0,N)$ we have
\begin{align*}
&|b(t, x, \mu) |+ |\sigma(t, x, \mu)| \leq C_N,\\
&| b(t, x, \mu)- b(t, y, \nu) |+ |\sigma(t, x, \mu)- \sigma(t, y, \nu)| \leq  C_N\big(| x- y | + W_2(\mu,\nu)\big).
\end{align*}
Here $B(0,N)$ denotes the closed ball in $\R^d$ centered at the origin with radius $N$.

\item [(H2)](Lyapunov condition)
There exists a nonnegative function $ V\in C^{2,(1,1)}(\mathbb R^d \times \mathcal P_2(\mathbb R^d))$ such that there exist nonnegative constants $ \lambda,\tilde\lambda  \in \mathbb R $ satisfying for all $(x,\mu)\in \mathbb R^d \times \mathcal P_2(\R^d)$
\begin{align*}
\mathcal LV(x,\mu)  &\leq  \lambda V(x, \mu) + \tilde\lambda \int _{\mathbb R^d} V(x, \mu)\mu(dx), \\
V_R&:= \inf_{|x|\geq R}V(x, \mu)\to \infty ~as~ R\to \infty,
\end{align*}
where $\mathcal L$ denotes the generator.

\item [(H3)](Continuity) For any bounded sequences $ \{{x_n, \mu_n}\} \in \mathbb R^d \times \mathcal P_V (\mathbb R^d) $ with $ x_n \to x $ and $\mu_n \to \mu$ weakly in $\mathcal P (\mathbb R^d)$ as $ n\to \infty $, we have
\begin{equation}\nonumber
\lim_{n\to \infty} \sup_{t\in [0,T]}{ | b(t, x_n, \mu _n)- b(t, x, \mu) |+ |\sigma(t, x_n, \mu _n)- \sigma(t, x, \mu)|}= 0.
\end{equation}
where $ \mathcal P_V (\mathbb R^d):= \left\{ \mu \in \mathcal P(\mathbb R^d):\int_{\mathbb R^d} V(x,\mu)\mu(dx) <\infty\right\}$.

\item[(H4)] There exist constants $\ell>1,K>0$ such that
\begin{align*}
|b(t,x,\mu)|^{2\ell}+|\sigma(t,x,\mu)|^{2\ell} \le K(1+V(x,\mu)).
\end{align*}

\item [(H5)]There exist constants $M,\epsilon >0 $ and increasing unbounded function $L: \mathbb N \to (0,\infty)$ such that for any $N\geq 1,|x|\vee |y| \leq N $ and $  \mu,\nu \in \mathcal P (\mathbb R ^d ) $ satisfying
\begin{align*}
&| b(t, x, \mu)- b(t, y, \nu) |+ |\sigma(t, x, \mu)- \sigma(t, y, \nu)|\\
&\leq L_N(| x- y |+ W_{2,N}(\mu , \nu)+ Me^{-\epsilon L_N}(1\wedge W_2(\mu,\nu))),
\end{align*}
where
\begin{align*}
W_{2,N}^2(\mu,\nu):= \inf_{\pi \in \mathcal C(\mu,\nu)} \int_{\mathbb R^d \times \mathbb R^d}|\phi_N(x) -\phi_N(y)|^2 \pi(dx,dy) ,
\phi_N (x):= \frac{Nx}{N\vee |x|}.
\end{align*}
\end{enumerate}
\begin{rem}\rm
\begin{enumerate}
\item  The condition (H4) guarantees the existence of tight sequences as well as solutions with finite second-order moments.
\item  If the function $L:\mathbb N\to (0,\infty)$ in (H5) is bounded, i.e. $b$ and $\sigma$ are globally Lipschitz, $M$ should be $0$. This then reduces to the case of Lipschitz condition, so we assume that the function $L$ is unbounded in (H5).
\end{enumerate}
\end{rem}
\begin{thm}\label{Thm1}
Assume (H1)--(H5). Then for any $T>0, X_0\in L^2(\Omega, \mathcal F, P)$, equation \eqref{MVSDE} has a solution $X_.$
which satisfies
\begin{equation}\label{ES}
EV(X_t,\mathcal L_{X_t})\le e^{(\lambda+ \tilde\lambda)t}EV(X_0,\mathcal L_{X_0}),\quad\hbox{for } t\in [0,T].
\end{equation}
Moreover, if (H5) holds, the solution is unique.
\end{thm}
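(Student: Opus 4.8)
The plan is to follow the Khasminskii truncation scheme announced in the introduction: approximate \eqref{MVSDE} by globally Lipschitz MVSDEs whose solutions $X^N$ exist and are unique by classical theory, derive uniform-in-$N$ moment bounds, extract a weakly convergent subsequence of the laws on path space, identify the limit as a weak solution via Skorokhod's representation and the martingale representation theorem, and upgrade weak existence plus pathwise uniqueness to a unique strong solution through the modified Yamada--Watanabe theorem \cite[Lemma 2.1]{HW}. First I would construct the truncation. Let $\phi_N(x)=Nx/(N\vee|x|)$ be the metric projection onto $B(0,N)$ as in (H5), which is $1$-Lipschitz and equals the identity on $B(0,N)$, and let $\Phi_N(\mu):=\mu\circ\phi_N^{-1}$ be the induced push-forward, which is supported in $B(0,N)$ and contracts $W_2$, so that $W_2(\Phi_N\mu,\Phi_N\nu)\le W_2(\mu,\nu)$. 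Setting $b_N(t,x,\mu):=b(t,\phi_N(x),\Phi_N\mu)$ and $\sigma_N(t,x,\mu):=\sigma(t,\phi_N(x),\Phi_N\mu)$, condition (H1) makes $b_N,\sigma_N$ bounded by $C_N$ and globally Lipschitz in $(x,\mu)$ with respect to $|\cdot|+W_2$; hence the truncated equation with these coefficients and initial value $\xi$ has a unique strong solution $X^N$ by the standard Picard/fixed-point argument for Lipschitz MVSDEs.

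Next I would establish the key uniform estimate
\[
\sup_{N}\sup_{t\le T}EV(X^N_t,\mathcal L_{X^N_t})\le e^{(\lambda+\tilde\lambda)T}EV(X_0,\mathcal L_{X_0}).
\]
Applying It\^o's formula (the Proposition) to $V(X^N_t,\mathcal L_{X^N_t})$, which is legitimate since $\sigma_N$ is bounded, and invoking the Lyapunov condition (H2) together with Gr\"onwall's inequality would give this bound. Combined with $V_R\to\infty$ it yields $\sup_N P(\sup_{t\le T}|X^N_t|\ge R)\to0$ as $R\to\infty$, i.e. non-explosion uniformly in $N$. Here (H4) enters decisively: since $|b_N|^{2\ell}+|\sigma_N|^{2\ell}\le K(1+V)$ with $\ell>1$, the moment bound controls $\sup_N E\int_0^T(|b_N|^{2\ell}+|\sigma_N|^{2\ell})\,ds$, and a Burkholder--Davis--Gundy estimate together with H\"older's inequality gives $E|X^N_t-X^N_s|^{2\ell}\le C|t-s|^{\ell}$ with exponent $\ell>1$; by Kolmogorov's criterion the laws $\{\mathcal L_{X^N}\}$ are tight in $C([0,T];\R^d)$.

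Finally I would pass to the limit and treat uniqueness. By Prohorov's theorem a subsequence of $\{\mathcal L_{X^N}\}$ converges weakly, and Skorokhod's representation theorem provides copies $\tilde X^N\to\tilde X$ almost surely on a common space. Using the continuity condition (H3) the coefficients converge along the paths (the uniform moment bound upgrading weak convergence of the marginals to $W_2$ convergence); passing to the limit in the associated martingale problem and applying the martingale representation theorem exhibits a Brownian motion and a weak solution $X$, and the continuity and nonnegativity of $V$ with Fatou's lemma show $X$ has finite second moments and satisfies \eqref{ES}. For uniqueness I would prove pathwise uniqueness from (H5): for two $\mathcal P_2$-solutions $X,Y$ with common $\xi$ and $W$, localize by $\tau_N=\inf\{t:|X_t|\vee|Y_t|\ge N\}$, apply It\^o to $|X-Y|^2$, and exploit that the $1$-Lipschitz projection $\phi_N$ makes both $W_{2,N}$ and $W_2$ of the marginals dominated by the $L^2$-distance of the solutions, while the distribution-dependent term carries the vanishing constant $L_N M e^{-\epsilon L_N}$; a Gr\"onwall estimate, together with $\tau_N\to\infty$ as $N\to\infty$, forces $E|X_t-Y_t|^2=0$. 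The modified Yamada--Watanabe theorem then converts weak existence and pathwise uniqueness into the unique strong solution.

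I expect the main obstacle to be the uniform moment estimate. Because the simultaneous space-and-distribution truncation evaluates $b_N,\sigma_N$ at $(\phi_N(x),\Phi_N\mu)$ while $V$ and its derivatives are evaluated at $(x,\mu)$, the Lyapunov inequality (H2) does not transfer to the truncated generator verbatim, and reconciling this mismatch --- either by choosing the truncation compatibly with $V$, or by controlling the discrepancy through (H4) and the smallness in $W_2$ of $\Phi_N\mu-\mu$ on $\mathcal P_2$ --- is the delicate step on which tightness, and hence the whole existence argument, rests. The secondary delicate point is the uniqueness Gr\"onwall, where the exploding local Lipschitz constant $L_N$ must be balanced against the decay $e^{-\epsilon L_N}$ of the global term; this is precisely the mechanism that (H5) and the modified Yamada--Watanabe theorem are designed to exploit.
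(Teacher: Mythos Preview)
Your outline --- truncate in space and distribution, derive uniform bounds, extract a tight subsequence, pass to the limit via Skorokhod and the martingale representation theorem, then invoke the modified Yamada--Watanabe lemma --- is exactly the paper's strategy, and your handling of tightness through (H4) and Kolmogorov, and of pathwise uniqueness through (H5), is on target.

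The genuine gap is the one you yourself flag: the uniform Lyapunov estimate. Neither of your proposed fixes works as stated. Tailoring the truncation to $V$ is undefined, and controlling the discrepancy via $W_2(\Phi_N\mu,\mu)$ fails because this quantity is not small uniformly in $\mu\in\mathcal P_2(\mathbb R^d)$ --- and you need the bound \emph{before} you know that $\mathcal L_{X^N_t}$ is essentially supported in $B(0,N)$. The paper resolves the mismatch by a device you are missing: rather than apply It\^o's formula to $V(X^N_t,\mathcal L_{X^N_t})$, stop the process at $\tau^N:=\inf\{t:|X^N_t|\ge N\}$ and set $Y^N_t:=X^N_{t\wedge\tau^N}$. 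The key observation is that $\phi_N(X^N_t)=X^N_{t\wedge\tau^N}$ and $\mathcal L_{X^N_t}\circ\phi_N^{-1}=\mathcal L_{X^N_{t\wedge\tau^N}}$, so the truncated equation rewrites as
\[
dY^N_t=\mathbb I_{[0,\tau^N]}(t)\,b\bigl(t,Y^N_t,\mathcal L_{Y^N_t}\bigr)\,dt+\mathbb I_{[0,\tau^N]}(t)\,\sigma\bigl(t,Y^N_t,\mathcal L_{Y^N_t}\bigr)\,dW_t,
\]
with the \emph{original} coefficients. Now (H2) applies verbatim to $\mathcal LV(Y^N_s,\mathcal L_{Y^N_s})$, the indicator only helps, and Gr\"onwall yields the uniform bound $EV(Y^N_t,\mathcal L_{Y^N_t})\le e^{(\lambda+\tilde\lambda)t}EV(X_0,\mathcal L_{X_0})$. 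All downstream steps --- the (H4) increment estimate, tightness of $\{\mathcal L_{Y^N}\}$ on $C[0,T]$, and the martingale-problem limit --- are then run for $Y^N$ rather than $X^N$. Once you insert this stopping-time trick, your plan goes through essentially as written.
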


\begin{proof}
If the Lyapunov function $V$ is independent of $\mu$, this degenerates into the situation in \cite[Theorem 3.3]{LM}.
And the proof of uniqueness is omitted since it is the same as \cite[Proposition 2.1]{RTW} and \cite[Theorem 3.3]{LM}.
Therefore we only prove the existence of solution for equation \eqref{MVSDE} under the distribution-dependent Lyapunov condition.

We truncate the coefficients $b$ and $\sigma$ as follows: for any $n\geq 1, t\in [0,T], x\in \mathbb R^d$ and $\mu \in \mathcal P(\mathbb R^d)$, define
\begin{align*}
b^n(t, x, \mu):= b(t, \phi_n(x), \mu \circ \phi_n),\\
\sigma^n(t, x, \mu):= \sigma(t, \phi_n(x), \mu \circ \phi_n).
\end{align*}
Thus for each $n\geq 1$, $b^n, \sigma^n$ are Lipschitz and satisfy the linear growth condition.
Therefore, by \cite[Theorem 2.1]{Wang_18} or \cite[Theorem 3.1]{LM}, the equation
\begin{equation}\label{TMVSDE}
\left\{
\begin{aligned}
dX^n_t&= b^n(t,X^n_t,\mathcal{L}_{X^n_t})dt+ \sigma^n(t,X^n_t,\mathcal{L}_{X^n_t})dW_t\\
X^n_0 &= X_0 \\
\end{aligned}
\right.
\end{equation}
has a unique solution $X^n$ satisfying
\begin{equation}\label{ESofXn}
E\sup_{t\in [0,T]}|X^n_t|^2<\infty.
\end{equation}
Define
\begin{align*}
\tau^n:= \inf\{ t\geq 0: |X^n_t|\geq n \}.
\end{align*}
By the definition of $\phi_n$, we get
\begin{align*}
\phi_n(X ^n_t)= \frac{X^n_t \cdot n}{|X^n_t| \vee n}= X^n_{t\wedge \tau^n},
\end{align*}
and for every measurable set $A \subset \mathbb R^d$, we have
\begin{align*}
&{(\mathcal{L}_{X^n_t}\circ \phi_n^{-1}})(A)= P(X^n_t \in \phi_n^{-1}(A))\\
={}& P(\phi_n(X^n_t)\in A)=\mathcal{L}_{\phi_n (X^n_t)}(A)= \mathcal{L}_{X^n_{t\wedge \tau^n}}(A).
\end{align*}
Hence the equation \eqref{TMVSDE} is equivalent to the following SDE:
\begin{equation}\label{TEMVSDE}
\left\{
\begin{aligned}
dX^n_t&= b(t,X^n_{t\wedge \tau^n},\mathcal{L}_{X^n_{t\wedge \tau^n}})dt+ \sigma(t,X^n_{t\wedge \tau^n},\mathcal{L}_{X^n_{t\wedge \tau^n}})dW_t\\
X^n_0 &= X_0. \\
\end{aligned}
\right.
\end{equation}
Therefore, the stopped process $Y^n_t:=X^n_{t\wedge \tau^n}$ satisfies the following equation:
\begin{align}\label{SMVSDE}
\left\{
\begin{aligned}
dY^n_t
={} &
\mathbb I_{[0,\tau^n]}(t)b(t,Y^n_t,\mathcal{L}_{Y^n_t})dt
 +
\mathbb I_{[0,\tau^n]}(t)\sigma(t,Y^n_t,\mathcal{L}_{Y^n_t})dW_t\\
Y^n_0 = {}& X_0, \\
\end{aligned}
\right.
\end{align}
where $\mathbb I$ denotes the indicator function.

By inequality \eqref{ESofXn}, we have
\begin{align*}
E|Y^n_t|^2
&={}
E|X^n_{t\wedge \tau^n}|^2=\int_{\{t\le \tau^n\}}|X^n_t|^2dP+\int_{\{t> \tau^n\}}n^2 dP\\
&\le{}
E|X^n_t|^2+n^2<\infty,
\end{align*}
i.e. $\mathcal L_{Y^n_t}\in\mathcal P_2(\mathbb R^d)$ for any $n\ge 1$.
Thus, applying It\^o's formula to $V(Y^n_t,\mathcal{L}_{Y^n_t})$,
we obtain
\begin{align*}
&V(Y^n_t,\mathcal{L}_{Y^n_t})- V(X_0, \mathcal L_{X_0})\\
={} &
\int_0^t \partial_{x} V(Y^n_s,\mathcal{L}_{Y^n_s})\cdot \mathbb I_{[0,\tau^n]}(s)b(s,Y^n_s,\mathcal{L}_{Y^n_s})\\
& +{}
\frac12 \partial_{x} ^2V(Y^n_s,\mathcal{L}_{Y^n_s})\cdot \mathbb I_{[0,\tau^n]}(s)(\sigma\sigma^{\top})(s,Y^n_s,\mathcal{L}_{Y^n_s})\\
&+{}
\widetilde E\bigg[\partial_{\mu} V(Y^n_s,\mathcal{L}_{Y^n_s})(\widetilde {Y^n_s})\cdot \mathbb I_{[0,\tau^n]}(s)b(s,\widetilde {Y^n_s},\mathcal{L}_{Y^n_s})\\
& \quad+{}
\frac12 \partial_{y}\partial_{\mu}V(Y^n_s,\mathcal{L}_{Y^n_s})(\widetilde {Y^n_s}) \cdot \mathbb I_{[0,\tau^n]}(s)(\sigma\sigma^{\top})(s,\widetilde {Y^n_s},\mathcal{L}_{Y^n_s})\bigg]ds\\
& +
\int_0^t \langle \partial_x V(Y^n_s,\mathcal{L}_{Y^n_s}),  \mathbb I_{[0,\tau^n]}(s)\sigma(t,Y^n_s,\mathcal{L}_{Y^n_s})dW_s\rangle\\
={} &
\int_0^t \mathbb I_{[0,\tau^n]}(s)\mathcal LV(Y^n_s,\mathcal{L}_{Y^n_s})ds\\
& +
\int_0^t \langle \partial_x V(Y^n_s,\mathcal{L}_{Y^n_s}),  \mathbb I_{[0,\tau^n]}(s)\sigma(t,Y^n_s,\mathcal{L}_{Y^n_s})dW_s\rangle,
\end{align*}
where $\mathcal L$ denotes the generator.
Taking expectation on both sides, we have
\begin{align*}
&EV(Y^n_t,\mathcal{L}_{Y^n_t})- EV(X_0, \mathcal L_{X_0})
={}
E\int_0^t \mathbb I_{[0,\tau^n]}(s)\mathcal LV(Y^n_s,\mathcal{L}_{Y^n_s})ds\\
\le{} &
E\int_0^t \mathbb I_{[0,\tau^n]}(s)\lambda V(Y^n_s,\mathcal{L}_{Y^n_s})ds
\le{}
E\int_0^t \lambda V(Y^n_s,\mathcal{L}_{Y^n_s})ds,
\end{align*}
where the first inequality is due to (H2). By Gronwall's inequality, we have
\begin{align}\label{ESofT}
EV(Y^n_t,\mathcal{L}_{Y^n_t})
\le{}
EV(X_0, \mathcal L_{X_0})e^{\lambda t}.
\end{align}
On the other hand, we get
\begin{align*}
&EV(Y^n_t,\mathcal{L}_{Y^n_t})
={}EV(X^n_{t\wedge \tau^n},\mathcal{L}_{X^n_{t\wedge \tau^n}})\\
={} &
\int_{\{t< \tau^n\}}V(X^n_{t\wedge \tau^n},\mathcal{L}_{X^n_{t\wedge \tau^n}})dP
+\int_{\{t\ge \tau^n\}}V(X^n_{t\wedge \tau^n},\mathcal{L}_{X^n_{t\wedge \tau^n}})dP\\
\ge{} &
P(t\ge\tau^n)V(n,\mathcal{L}_{X^n_{t\wedge\tau^n}}).
\end{align*}
Thus, we obtain
\begin{equation}\label{Lim}
P(t\ge\tau^n) \le \frac{EV(Y^n_t,\mathcal{L}_{Y^n_t})}{\inf_{n\to\infty}V(n,\mathcal{L}_{X^n_{t\wedge\tau^n}})}
\le \frac{EV(X_0, \mathcal L_{X_0})e^{\lambda t}}{\inf_{n\to\infty}V(n,\mathcal{L}_{X^n_{t\wedge\tau^n}})}
\to 0, \quad \hbox{ as}~n\to\infty.
\end{equation}

By (H4), BDG's inequality, H\"older's inequality and inequality \eqref{ESofT}, there exists a constant $C(\ell,K)>0$ such that
\begin{align}\label{ESofUC}
& E(| Y^n_t- Y^n_s|^{2\ell})\\\nonumber
={} &
E\bigg(\bigg|\int_s^t \mathbb I_{[0,\tau^n]}(r)b(r,Y^n_r,\mathcal{L}_{Y^n_r})dr
+ \int_s^t \mathbb I_{[0,\tau^n]}(r) \sigma(r,Y^n_r,\mathcal{L}_{Y^n_r})dW_r \bigg|^{2\ell}\bigg)\\\nonumber
\leq{} &
C(\ell) E\int_s^t|\mathbb I_{[0,\tau^n]}(r)b(r,Y^n_r,\mathcal{L}_{Y^n_r})|^{2\ell}dr\cdot |t-s|^{2\ell-1}\\\nonumber
& +{}
C(\ell)E\bigg(\int_s^t|\mathbb I_{[0,\tau^n]}(r)\sigma(r,Y^n_r,\mathcal{L}_{Y^n_r})|^2 dr\bigg)^{\ell}\\\nonumber
\leq{} &
C(\ell,K) E\int_s^t1+V(Y^n_r,\mathcal{L}_{Y^n_r})dr\cdot |t-s|^{2\ell-1}\\\nonumber
& +{}
C(\ell)E\bigg(\int_s^t|\mathbb I_{[0,\tau^n]}(r)\sigma(r,Y^n_r,\mathcal{L}_{Y^n_r})|^{2\ell} dr\bigg)\cdot |t-s|^{\ell-1}\\\nonumber
\le{} &
C(\ell,K)\big(1+e^{\lambda T}EV(X_0,\mathcal L_{X_0})\big)\cdot |t-s|^{\ell}.
\end{align}
Let $k=\left[\frac{T}{\epsilon}\right]+1$ where $[a]$ denotes the integer part of $a\in \mathbb R$. Then we get
\begin{align*}
&E\big(\sup_{s,t\in[0,T],|t-s|\leq \epsilon}| Y^n_t-Y^n_s|^{2\ell}\big)\\
\leq{} &
C(\ell,k)\sum_{j=1}^{k} E\big(| Y^n_{j\epsilon}-Y^n_{(j-1)\epsilon}|^{2\ell}\big)\\
\leq{} &
C(\ell,k,K)\big(1+e^{\lambda T}EV(X_0,\mathcal L_{X_0})\big)\cdot(T+\epsilon)\epsilon^{\ell-1}.
\end{align*}
Thus, by Ascoli-Arzela theorem, $\{\mathcal L_{Y^n}\}$ is tight on $C[0,T]$,
i.e. there exists a subsequence, still denoted $\{\mathcal L_{Y^n}\}$, which is weakly convergent to some measure $\mu\in\mathcal P(C[0,T])$.
By Skorokhod's representation theorem, there are $C[0,T]$-valued random variables $\overline {Y^n}$ and $\bar X$
on some probability space $(\bar \Omega,\bar {\mathcal F}, \bar P)$
such that $\mathcal L_{\overline {Y^n}}=\mathcal L_{Y^n}, \mathcal L_{\bar X}=\mu$, and
\begin{align}\label{Limit}
\overline {Y^n}\to \bar X~a.s.
\end{align}
Moreover, by H\"older's inequality, BDG's inequality, (H4) and inequality \eqref{ESofT} we obtain
\begin{align*}
\bar E\sup_{t\in [0,T]}| \overline {Y^n_t}|^{2\ell}
={} &
E\sup_{t\in [0,T]}| Y^n_t|^{2\ell}\\
\le{} &
C(\ell)E(\sup_{t\in [0,T]}| Y^n_t- Y^n_0|^{2\ell})+ C(\ell)E|Y^n_0|^{2\ell}\\
\le{} &
C(\ell)E\bigg(\sup_{t\in [0,T]}\bigg|\int_0^t \mathbb I_{[0,\tau^n]}(r)b(r,Y^n_r,\mathcal{L}_{Y^n_r})dr\\
& +{} \int_0^t \mathbb I_{[0,\tau^n]}(r) \sigma(r,Y^n_r,\mathcal{L}_{Y^n_r})dW_r \bigg|^{2\ell}\bigg)+ C(\ell)E|Y^n_0|^{2\ell}\\\nonumber
\leq{} &
C(\ell) E\int_0^T|\mathbb I_{[0,\tau^n]}(r)b(r,Y^n_r,\mathcal{L}_{Y^n_r})|^{2\ell}dr\cdot T^{2\ell-1}\\\nonumber
& +{}
C(\ell)E\bigg(\int_0^T|\mathbb I_{[0,\tau^n]}(r)\sigma(r,Y^n_r,\mathcal{L}_{Y^n_r})|^2 dr\bigg)^{\ell}+ C(\ell)E|Y^n_0|^{2\ell}\\\nonumber
\leq{} &
C(\ell,K) E\int_0^T1+V(Y^n_r,\mathcal{L}_{Y^n_r})dr\cdot T^{2\ell-1}\\\nonumber
& +{}
C(\ell)E\bigg(\int_0^T|\mathbb I_{[0,\tau^n]}(r)\sigma(r,Y^n_r,\mathcal{L}_{Y^n_r})|^{2\ell} dr\bigg)\cdot T^{\ell-1}+ C(\ell)E|Y^n_0|^{2\ell}\\\nonumber
\le{} &
C(\ell,K)\big(1+e^{\lambda T}EV(X_0,\mathcal L_{X_0})\big)\cdot T^{\ell}+ C(\ell)E|Y^n_0|^{2\ell}.
\end{align*}
where $\bar E$ denotes the expectation on the probability space $(\bar \Omega,\bar {\mathcal F}, \bar P)$,
and the first equality holds by $\mathcal L_{\overline {Y^n}}=\mathcal L_{Y^n}$.
Thus, by Vitali convergence theorem we have
\begin{equation}\label{LofC}
\lim_{n\to\infty}\bar E\sup_{0\le t\le T}|\overline {Y^n_t}-\bar X_t|^{2\ell}=0.
\end{equation}
From this, we get that $\mathcal L_{\overline {Y^n}}$ converges weakly to $\mathcal L_{\bar X}$ in $\mathcal P_{2\ell}(C[0,T])$.
Furthermore, due to the continuity of Lyapunov function $V$, we obtain
$$
V(\overline {Y^n_t},\mathcal L_{\overline {Y^n_t}})\to V(\bar X_t,\mathcal{L}_{\bar X_t})\quad a.s.
$$
Applying inequality \eqref{ESofT} and Vitali convergence theorem, we have
\begin{equation}\label{ESofWS}
EV(\bar X_t,\mathcal{L}_{\bar X_t})
\le
EV(X_0, \mathcal L_{X_0})e^{\lambda t}.
\end{equation}

Claim:
$\overline {M^n_t}:= \overline {Y^n_t}- \overline {Y^n_0}- \int_0^t \mathbb I_{[0,\tau^n]}(r)b(r,\overline {Y^n_r},\mathcal{L}_{\overline {Y^n_r}})dr$ is a square-integrable martingale, and its quadratic variation
\begin{equation*}
\left[\overline {M^n}\right]_t= \int_0^t \mathbb I_{[0,\tau^n]}(r)(\sigma\cdot \sigma^T)(r,\overline {Y^n_r},\mathcal{L}_{\overline {Y^n_r}})dr.
\end{equation*}
Indeed, by the identical distribution between $\overline {Y^n}$ and $Y^n$, we have
\begin{align}\label{Mar1}
&\bar E[(\overline {M^n_t}-\overline {M^n_s})\cdot \Phi(\overline {Y^n}|_{[0,s]})]\\\nonumber
= {}&
\bar E\bigg[\bigg(\overline {Y^n_t}- \overline {Y^n_s}- \int_s^t \mathbb I_{[0,\tau^n]}(r)b(r,\overline {Y^n_r},\mathcal{L}_{\overline {Y^n_r}})dr\bigg)\cdot \Phi(\overline {Y^n}|_{[0,s]})\bigg]\\\nonumber
= {}&
0,
\end{align}
and
\begin{align}\label{Mar2}
&\bar E\bigg[\bigg(\overline {M^n_t}\cdot\overline {M^n_t}-\overline {M^n_s}\cdot\overline {M^n_s}\\\nonumber
&\quad - \int_s^t \mathbb I_{[0,\tau^n]}(r)(\sigma\cdot \sigma^T)(r,\overline {Y^n_r},\mathcal{L}_{\overline {Y^n_r}})dr)\cdot \Phi(\overline {Y^n}|_{[0,s]}\bigg)\bigg]
=0,
\end{align}
for any $\Phi\in C[0,T]$.

By the limit \eqref{Limit} and the continuity of coefficient $b$, we get
\begin{align*}
\lim_{n\to\infty}|b(r,\overline {Y^n_r},\mathcal{L}_{\overline {Y^n_r}})-b(r,\bar X_{r},\mathcal{L}_{\bar X_{r}})|=0
\quad \bar P\raisebox{0mm}{-} a.s.
\end{align*}
and due to (H4), inequalities \eqref{ESofT} and \eqref{ESofWS}, we have
\begin{align*}
&\bar E|b(r,\overline {Y^n_r},\mathcal{L}_{\overline {Y^n_r}})-b(r,\bar X_{r},\mathcal{L}_{\bar X_{r}})|^{2\ell}\\
\le{} &
C(\ell)E|b(r,\overline {Y^n_r},\mathcal{L}_{\overline {Y^n_r}})|^{2\ell}
+ C(\ell)E|b(r,\bar X_{r},\mathcal{L}_{\bar X_{r}})|^{2\ell}\\
\le{} &
C(\ell)(1+ EV(\overline {Y^n_r},\mathcal{L}_{\overline {Y^n_r}})+EV(\bar X_{r},\mathcal{L}_{\bar X_{r}}))\\
< {}&\infty.
\end{align*}
Therefore, by Vitali convergence theorem, we obtain
\begin{align*}
\lim_{n\to\infty}\bar E|b(r,\overline {Y^n_r},\mathcal{L}_{\overline {Y^n_r}})-b(r,\bar X_{r},\mathcal{L}_{\bar X_{r}})|^{2\ell}=0.
\end{align*}
In a similar way, we have
\begin{align*}
\lim_{n\to\infty}\bar E|\sigma(r,\overline {Y^n_r},\mathcal{L}_{\overline {Y^n_r}})-\sigma(r,\bar X_{r},\mathcal{L}_{\bar X_{r}})|^{2\ell}=0.
\end{align*}
Thus, let $n\to\infty$ in equality \eqref{Mar1} as well as \eqref{Mar2} and denote $\bar M:=\lim_{n\to\infty}\overline {M^n}$,
by the limit \eqref{Lim} we get
\begin{align*}
&\bar E[(\bar M_t-\bar M_s)\cdot \Phi(\bar X|_{[0,s]})]=0,\\
&\bar E\big[\big(\bar M_t\cdot\bar M_t-\bar M_s\cdot\bar M_s- \int_s^t\sigma\cdot \sigma^T(r,\bar X_{r},\mathcal{L}_{\bar X_{r}})dr\big)\cdot \Phi(\bar X|_{[0,s]})\big]
=0.
\end{align*}
By the martingale limit theorem, see for instance \cite[Proposition 1.3]{Chung-Williams},
$\bar M(t)$ is a square integrable martingale, and its quadratic variation
$$
\left[M\right]_t= \int_0^t \sigma\cdot \sigma^T(r,\bar X_r,\mathcal{L}_{\bar X_r})dr.
$$
Therefore, by the martingale representation theorem, there exists a Brownian motion $\bar W$
on the probability space $(\bar \Omega,\bar {\mathcal F}, \bar P)$ such that
$$
M(t)= \int_0^t \sigma(r,\bar X_r,\mathcal{L}_{\bar X_r})d\bar W_r.
$$
Moreover, due to equation \eqref{SMVSDE} and $\mathcal L_{\overline {Y^n}}=\mathcal L_{Y^n}$, we have
\begin{align}\label{SMVSDE1}
\left\{
\begin{aligned}
d\overline {Y^n_t}
={} &
\mathbb I_{[0,\tau^n]}(t)b(t,\overline {Y^n_t},\mathcal{L}_{\overline {Y^n_t}})dt
 +
d\overline {M^n_t}\\
\overline {Y^n_0} = {}& X_0. \\
\end{aligned}
\right.
\end{align}
Thus, let $n\to\infty$, we obtain that $(\bar X, \bar W)$ is a weak solution for MVSDE \eqref{MVSDE}.

Next, we prove the strong existence for equation \eqref{MVSDE}.
Let $\mu_t=\mathcal L_{\bar X_t}$, there has strong uniqueness under the assumptions (H1) and (H2)
for the following SDE:
\begin{align*}
dX_t= b(t,X_t,\mu_t)dt+ \sigma(t,X_t,\mu_t)dW_t.
\end{align*}
Thus, by a modified Yamada-Watanabe theorem (see \cite[Lemma 2.1]{HW}), the equation \eqref{MVSDE} has a strong solution $X$.

We could also obtain the weak uniqueness by the modified Yamada-Watanabe theorem.
That is to say that $\mathcal L_X=\mathcal L_{\bar X}$.
Therefore the estimate \eqref{ES} follows from inequality \eqref{ESofWS}.
The proof is complete.
\end{proof}
\begin{rem}\label{Re1}
\begin{enumerate}
\item By taking $V(x,\mu)=(x+\int y\mu(dy))^{\kappa\ell}$ where $\kappa>2$, (H4) reduces to the growth condition $A4$ in \cite{HHL}.
      And if $V$ is independent of $\mu$, (H4) is equivalent to the Lyapunov condition in \cite{BRS}.

\item The Lyapunov condition in \cite{LM} implies that the dependence on distribution variable of coefficients is `linear'.
That is to say that the dependence is of the form $\mu(f)$ rather than the form $(\mu(f))^p$
where $f:\mathbb R^d\to \mathbb R^d$ is a function, and constant $p>1$.
However, here we do not have this restriction.

\item Note that the distribution $\mathcal L_{\bar X}$ denotes the push forward measure
      on the probability space $(\bar \Omega,\bar {\mathcal F}, \bar P)$.
      While a more precise notation would be $\mathcal L_{\bar X|\bar P}$,
      we think that it does not cause any confusion, so we abbreviate it as $\mathcal L_{\bar X}$ in the context.

\item By the estimate \eqref{ES} and condition (H4), we get $\mathcal L_{X_t}\in \mathcal P_2(\mathbb R^d)$
which indicates that It\^o's formula could be applied to function $f\in C^{2,(1,1)}(\mathbb R^d\times \mathcal P_2(\mathbb R^d))$.
\end{enumerate}
\end{rem}

\section{Ergodicity}
In this section, we study ergodicity for MVSDEs under Lyapunov conditions.
Before giving the main results, we define the semigroups $P_t$ and $P_t^*$,
and get linearity of the corresponding Fokker-Planck equations on $\mathbb R^d\times \mathcal P_2(\mathbb R^d)$,
which implies that the Krylov-Bogolioubov theorem is valid.
The main result in this section is ergodic in the weak topology sense, which is obtained by the Krylov-Bogolioubov theorem,
under the distribution-dependent Lyapunov condition.
Moreover, if the Lyapunov function depends only on space variable, we obtain exponential ergodicity for semigroup $P_t^*$ on $\mathcal P(\mathbb R^d\times \mathcal P_2(\mathbb R^d))$ under Wasserstein quasi-distance induced by the Lyapunov function.

Before illustrating the main results in this section, we make some preparations about semigroups $P_t$ on $C_b(\mathbb R^d\times \mathcal P_2(\mathbb R^d))$ and $P_t^*$ on $\mathcal P(\mathbb R^d\times \mathcal P_2(\mathbb R^d))$.
We consider the following coupled SDEs:
\begin{align}
\left\{
\begin{aligned}\label{MVSDE1}
&dX_t= b(t,X_t,\mathcal{L}_{X_t})dt+ \sigma(t,X_t,\mathcal{L}_{X_t})dW_t,~\mathcal L_{X_0}=\mu,\\
&d\bar X_t= \bar b(t,\bar X_t,\mathcal{L}_{X_t})dt+ \bar \sigma(t,\bar X_t,\mathcal{L}_{X_t})dW_t,~\bar X_0=x,
\end{aligned}
\right.
\end{align}
where $x\in\mathbb R^d,\mu\in\mathcal P_2(\mathbb R^d)$.
If the coefficients $b,\sigma,\bar b,\bar \sigma$ satisfy some conditions to be specified below, we have
\begin{align}\label{ItoCouple}
V(\bar X_t,\mathcal L_{X_t})
={}&
V(\bar X_0,\mathcal L_{X_0})+ \int_0^t\bar b(s,\bar X_s,\mathcal L_{X_s})\cdot \partial_x V(\bar X_s,\mathcal L_{X_s})\\\nonumber
&+{} \frac 12tr((\bar\sigma\bar\sigma^{\top})(s,\bar X_s,\mathcal L_{X_s})\partial_x^2 V(\bar X_s,\mathcal L_{X_s}))ds\\\nonumber
&+{} \int_0^t\bar\sigma(s,\bar X_s,\mathcal L_{X_s})\cdot \partial_x V(\bar X_s,\mathcal L_{X_s})dW_s\\\nonumber
&+{} \tilde E\int_0^t\bigg( b(s,\tilde X_s,\mathcal L_{X_s})\partial_{\mu} V(\bar X_s,\mathcal L_{X_s})(\tilde X_s)\\\nonumber
&\quad+ \frac 12tr((\sigma\sigma^{\top})(s,\tilde X_s,\mathcal L_{X_s})\partial_y \partial_{\mu} V(\bar X_s,\mathcal L_{X_s})(\tilde X_s))\bigg)ds\\\nonumber
=:{}&
V(\bar X_0,\mathcal L_{X_0})+ \int_0^t \bar L_1V(\bar X_s,\mathcal L_{X_s})+L_2V(\bar X_s,\mathcal L_{X_s})ds\\\nonumber
&+{}\int_0^t\bar\sigma(s,\bar X_s,\mathcal L_{X_s})\cdot \partial_x V(\bar X_s,\mathcal L_{X_s})dW_s
\end{align}
for $V\in C^{2,(1,1)}(\mathbb R^d\times \mathcal P_2(\mathbb R^d))$,
where
\begin{align*}
\bar L_1V(\bar X_s,\mathcal L_{X_s})
:={}&
\bar b(s,\bar X_s,\mathcal L_{X_s})\cdot \partial_x V(\bar X_s,\mathcal L_{X_s})+{} \frac 12tr((\bar\sigma\bar\sigma^{\top})(s,\bar X_s,\mathcal L_{X_s})\partial_x^2 V(\bar X_s,\mathcal L_{X_s})),
\end{align*}
and
\begin{align*}
L_2V(\bar X_s,\mathcal L_{X_s})
:={}&
\tilde E\bigg[ b(s,\tilde X_s,\mathcal L_{X_s})\partial_{\mu} V(\bar X_s,\mathcal L_{X_s})(\tilde X_s)\\
&+{} \frac 12tr((\sigma\sigma^{\top})(s,\tilde X_s,\mathcal L_{X_s})\partial_y \partial_{\mu} V(\bar X_s,\mathcal L_{X_s})(\tilde X_s))\bigg].
\end{align*}
Define
$$
P_tf(x,\mu):=\int_{\mathbb R^d\times \mathcal P_2(\mathbb R^d)}f(y,\nu)\mathcal L_{\bar X_t}(dy)\times \delta_{\{\mathcal L_{X_t}\}}(d\nu)
$$
for $f\in \mathcal B_b(\mathbb R^d\times \mathcal P_2(\mathbb R^d))$.
Thus, we have
\begin{align*}
P_tf(x,\mu)
=
\int_{\mathbb R^d}f(y,\mathcal L_{X_t})\mathcal L_{\bar X_t}(dy)
=
Ef(\bar X_t,\mathcal L_{X_t}).
\end{align*}
Moreover, if $f\in C^{2,(1,1)}(\mathbb R^d\times \mathcal P_2(\mathbb R^d))$, we get
$$
P_tf(x,\mu)
=
Ef(\bar X_t,\mathcal L_{X_t})
=
Ef(\bar X_0,\mathcal L_{X_0})+ E\int_0^t \bar L_1f(\bar X_s,\mathcal L_{X_s})+L_2f(\bar X_s,\mathcal L_{X_s})ds.
$$
Let $\pi_t:=\mathcal L_{\bar X_t}\times \delta_{\{\mathcal L_{X_t}\}}$ and $L_{\bar 1,2}f:=\bar L_1f+L_2f$, we obtain
\begin{align*}
\int fd\pi_t
&={}\int fd\pi_0+ \int_0^t\int L_{\bar 1,2}fd\pi_sds,
\end{align*}
i.e. $\partial \pi_t=L_{\bar 1,2}^*\pi_t$ is a linear equation. Here $L_{\bar 1,2}^*$ denotes the adjoint operator of $L_{\bar 1,2}$.
In particular, $L_{\bar 1,2}f=\mathcal Lf$ if $b=\bar b,\sigma=\bar\sigma$ and $\mu=\delta_x$.

Next we apply linearity of the corresponding Fokker-Planck equation to obtain that
\begin{align*}
P_t(P_sf)(x,\mu)
& ={}
\int_{\mathbb R^d\times \mathcal P_2(\mathbb R^d)}P_sf(y,\nu)\mathcal L_{\bar X_t^x}(dy)\times \delta_{\{\mathcal L_{X_t^{\mu}}\}}(d\nu)\\
& ={}
\int_{\mathbb R^d\times \mathcal P_2(\mathbb R^d)}\bigg(\int_{\mathbb R^d\times \mathcal P_2(\mathbb R^d)}f(z, m)\mathcal L_{\bar X_s^y}(dz)\times \delta_{\{\mathcal L_{X_t^{\nu}}\}}(dm)\bigg)\cdot\mathcal L_{\bar X_t^x}(dy)\times \delta_{\{\mathcal L_{X_t^{\mu}}\}}(d\nu)\\
& ={}
\int_{\mathbb R^d\times \mathcal P_2(\mathbb R^d)}f(z, m)\int_{\mathbb R^d\times \mathcal P_2(\mathbb R^d)}\mathcal L_{\bar X_s^y}(dz)\times \delta_{\{\mathcal L_{X_t^{\nu}}\}}(dm)\cdot\mathcal L_{\bar X_t^x}(dy)\times \delta_{\{\mathcal L_{X_t^{\mu}}\}}(d\nu)\\
& ={}
\int_{\mathbb R^d\times \mathcal P_2(\mathbb R^d)}f(z, m)\mathcal L_{\bar X_{t+s}^{x}}(dz)\times \delta_{\{\mathcal L_{X_{t+s}^\mu}\}}(dm)\\
& ={}
(P_{t+s}f)(x,\mu),
\end{align*}
where the third equality holds by Fubini theorem.
This means that the operator $P_t$ is a semigroup: $P_t\circ P_s= P_{t+s}$.

Define another operator semigroup $P_t^*, t\ge 0$ acting on the space $\mathcal P(\mathbb R^d\times \mathcal P_2(\mathbb R^d))$ with
$$
P_t^*\pi(dy,d\nu):= \int_{\mathbb R^d\times \mathcal P_2(\mathbb R^d)}\mathcal L_{\bar X_t^x}(dy)\times \delta_{\{\mathcal L_{X_t^{\mu}}\}}(d\nu)\pi(dx,d\mu),
$$
for any $\pi\in \mathcal P(\mathbb R^d\times \mathcal P_2(\mathbb R^d))$.
Moreover, the operators $P_t$ and $P_t^*$ are conjugate to each other in the sense that
\begin{align*}
&\int_{\mathbb R^d\times \mathcal P_2(\mathbb R^d)}P_tf(x,\mu)\pi(dx,d\mu)\\
={} &
\int_{\mathbb R^d\times \mathcal P_2(\mathbb R^d)}\bigg(\int_{\mathbb R^d\times \mathcal P_2(\mathbb R^d)}f(y,r)\mathcal L_{\bar X_t^x}(dy)\times \delta_{\{\mathcal L_{X_t^{\mu}}\}}(dr)\bigg)\cdot \pi(dx,d\mu)\\
={} &
\int_{\mathbb R^d\times \mathcal P_2(\mathbb R^d)}f(y,r)\int_{\mathbb R^d\times \mathcal P(\mathbb R^d)}\mathcal L_{\bar X_t^x}(dy)\times \delta_{\{\mathcal L_{X_t^{\mu}}\}}(dr)\cdot \pi(dx,d\mu)\\
={} &
\int_{\mathbb R^d\times \mathcal P_2(\mathbb R^d)}f(y,r)\mathcal P_t^*\pi(dy,dr),
\end{align*}
where the second equality holds by Fubini theorem.

Now we show that semigroup $P_t$ is Feller.
\begin{lem}\label{Lem1}
Assume that there exist constants $K,L>0$ such that the coefficients $b,\bar b,\sigma,\bar\sigma$ are Lipschitz with Lipschitz constant $L$,
and satisfy the linear growth conditions with linear growth constant $K$.
Then the semigroup $P_t$ is Feller.
\end{lem}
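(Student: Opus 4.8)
The plan is to establish the Feller property by showing that for every $f\in C_b(\R^d\times\mathcal P_2(\R^d))$ the map $(x,\mu)\mapsto P_tf(x,\mu)=Ef(\bar X^{x,\mu}_t,\mathcal L_{X^\mu_t})$ is continuous, boundedness being immediate from $\|f\|_\infty<\infty$. The structure to exploit is the two-layer dependence visible in the coupled system \eqref{MVSDE1}: the law $\mathcal L_{X^\mu_t}$ driving the second equation is itself the solution of a genuine McKean--Vlasov equation depending only on the initial law $\mu\in\mathcal P_2(\R^d)$, whereas $\bar X^{x,\mu}_t$ then depends on the starting point $x$ and, through $\mathcal L_{X^\mu_t}$, on $\mu$. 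Accordingly I would fix $T>0$, take $(x_n,\mu_n)\to(x,\mu)$ in $\R^d\times\mathcal P_2(\R^d)$, and prove the convergence in two steps, using throughout that the global Lipschitz and linear-growth hypotheses guarantee well-posedness of \eqref{MVSDE1} and uniform second-moment bounds $\sup_{t\le T}(E|X^\mu_t|^2+E|\bar X^{x,\mu}_t|^2)<\infty$.

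First I would establish continuous dependence of the nonlinear flow on its initial law, namely $\sup_{t\le T}W_2(\mathcal L_{X^{\mu_n}_t},\mathcal L_{X^\mu_t})\to0$. To do this I couple the two copies of the $X$-equation through a common Brownian motion and an optimal coupling of the initial data so that $E|X^{\mu_n}_0-X^\mu_0|^2=W_2^2(\mu_n,\mu)$. Applying It\^o's formula to $|X^{\mu_n}_t-X^\mu_t|^2$, using the Lipschitz bounds on $b,\sigma$, and bounding $W_2^2(\mathcal L_{X^{\mu_n}_s},\mathcal L_{X^\mu_s})\le E|X^{\mu_n}_s-X^\mu_s|^2$ (the coupled pair being one admissible coupling), Gronwall's inequality yields $\sup_{t\le T}E|X^{\mu_n}_t-X^\mu_t|^2\le W_2^2(\mu_n,\mu)e^{CT}\to0$, whence the claimed $W_2$-convergence of the laws.

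Next, with $\varepsilon_n:=\sup_{s\le T}W_2^2(\mathcal L_{X^{\mu_n}_s},\mathcal L_{X^\mu_s})\to0$ in hand, I would prove $\sup_{t\le T}E|\bar X^{x_n,\mu_n}_t-\bar X^{x,\mu}_t|^2\to0$. Once the input law is fixed, the $\bar X$-equation is an ordinary It\^o SDE, so coupling the two copies with the same Brownian motion and initial displacement $x_n-x$, It\^o's formula together with the Lipschitz hypothesis on $\bar b,\bar\sigma$ gives an estimate of the form $E|\bar X^{x_n,\mu_n}_t-\bar X^{x,\mu}_t|^2\le |x_n-x|^2+CT\varepsilon_n+C\int_0^tE|\bar X^{x_n,\mu_n}_s-\bar X^{x,\mu}_s|^2\,ds$, and Gronwall again forces the supremum to zero. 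In particular $\bar X^{x_n,\mu_n}_t\to\bar X^{x,\mu}_t$ in $L^2(\Omega)$.

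Finally I would combine the two convergences. From the $L^2$-convergence a subsequence of $\bar X^{x_n,\mu_n}_t$ converges almost surely to $\bar X^{x,\mu}_t$, while $\mathcal L_{X^{\mu_n}_t}\to\mathcal L_{X^\mu_t}$ in $\mathcal P_2(\R^d)$ deterministically; joint continuity of $f$ then gives $f(\bar X^{x_n,\mu_n}_t,\mathcal L_{X^{\mu_n}_t})\to f(\bar X^{x,\mu}_t,\mathcal L_{X^\mu_t})$ a.s.\ along that subsequence, and dominated convergence (using $\|f\|_\infty<\infty$) yields $P_tf(x_n,\mu_n)\to P_tf(x,\mu)$ along the subsequence. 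Since every subsequence admits such a further subsequence with the same limit, the whole sequence converges, which proves continuity. The only genuinely delicate point is bookkeeping the two-layer dependence correctly, i.e.\ ensuring that the law entering the $\bar X$-equation is precisely the nonlinear flow $\mathcal L_{X^\mu_t}$ and that its $W_2$-continuity from the first step enters as a forcing term $\varepsilon_n$ in the Gronwall estimate of the second step; the two Gronwall arguments themselves are routine under the standing assumptions.
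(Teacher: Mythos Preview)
Your proposal is correct and follows essentially the same route as the paper: first a Gronwall argument on the McKean--Vlasov layer (with optimally coupled initial data) to obtain $W_2$-continuity of $\mu\mapsto\mathcal L_{X^\mu_t}$, then a second Gronwall argument on the classical $\bar X$-equation with the first estimate entering as a forcing term, and finally dominated convergence for $P_tf$. The only cosmetic difference is that the paper invokes Skorokhod's representation theorem to manufacture almost-sure convergence for the last step, whereas you use the more direct subsequence extraction from $L^2$-convergence together with the ``every subsequence has a further subsequence'' argument.
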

\begin{proof}
Note that for any sequence $\{\mu_n\}\subset\mathcal P_2(\mathbb R^d)$ with $\lim_{n\to\infty}W_2(\mu_n,\mu)=0$,
by \cite[Theorem 4.1]{LM} there exist random variables $X^{\mu_n}_0,X^{\mu}_0$
such that $\mathcal L_{X^{\mu_n}_0}=\mu_n,\mathcal L_{X^{\mu}_0}=\mu$ and
\begin{equation}\label{W2}
W_2(\mu_n,\mu)^2=E|X^{\mu_n}_0-X^{\mu}_0|^2.
\end{equation}
Let $X^{\mu_n}_t,X^{\mu}_t$ denote the solutions of the following SDEs with initial values $X^{\mu_n}_0,X^{\mu}_0$ respectively,
$$dX_t= b(t,X_t,\mathcal{L}_{X_t})dt+ \sigma(t,X_t,\mathcal{L}_{X_t})dW_t.$$
Note that we have
\begin{align*}
E|X^{\mu_n}_t-X^{\mu}_t|^2
=&{}
E|X^{\mu_n}_0-X^{\mu}_0|^2
+
2E\int_0^t\langle X^{\mu_n}_s-X^{\mu}_s,b(s,X^{\mu_n}_s,\mathcal{L}_{X^{\mu_n}_s})-b(s,X^{\mu}_s,\mathcal{L}_{X^{\mu}_s})\rangle ds\\
&+
E\int_0^t|\sigma(s,X^{\mu_n}_s,\mathcal{L}_{X^{\mu_n}_s})-\sigma(s,X^{\mu}_s,\mathcal{L}_{X^{\mu}_s})|^2ds\\
\le&{}
E|X^{\mu_n}_0-X^{\mu}_0|^2
+
C(L)\int_0^tE|X^{\mu_n}_s-X^{\mu}_s|^2ds.
\end{align*}
By Gronwall's inequality, we get
\begin{align}\label{Lem4.2}
W_2(\mathcal{L}_{X^{\mu_n}_t},\mathcal{L}_{X^{\mu}_t})
&\le{}
E|X^{\mu_n}_t-X^{\mu}_t|^2
\le{}
e^{C(L)\cdot t}E|X^{\mu_n}_0-X^{\mu}_0|^2
={}e^{C(L)\cdot t}W_2(\mu_n,\mu).
\end{align}

For any sequence $\{x_n\}\subset\mathbb R^d$ with $x_n\to x$ as $n\to\infty$, let $\bar X^{x_n}_t,\bar X^{x}_t$ denote the solutions of the following SDEs respectively,
\begin{align*}
d\bar X_t^{x_n} &={} \bar b(t,\bar X_t^{x_n},\mathcal{L}_{X^{\mu_n}_t})dt+ \bar\sigma(t,\bar X_t^{x_n},\mathcal{L}_{X^{\mu_n}_t})dW_t,~\bar X_0^{x_n}=x_n\\
d\bar X_t^x &={} \bar b(t,\bar X_t^x,\mathcal{L}_{X^{\mu}_t})dt+ \bar\sigma(t,\bar X_t^x,\mathcal{L}_{X^{\mu}_t})dW_t,~\bar X_t^x=x.
\end{align*}
In a similar way, we have
\begin{align*}
E|\bar X_t^{x_n}-\bar X_t^x|^2
=&{}
E|\bar X_0^{x_n}-\bar X_0^x|^2
+
2E\int_0^t\langle \bar X^{x_n}_s-\bar X^x_s,b(s,\bar X^{x_n}_s,\mathcal{L}_{X^{\mu_n}_s})-b(s,\bar X^x_s,\mathcal{L}_{X^{\mu}_s})\rangle ds\\
&+
E\int_0^t|\sigma(s,\bar X^{x_n}_s,\mathcal{L}_{X^{\mu_n}_s})-\sigma(s,\bar X^x_s,\mathcal{L}_{X^{\mu}_s})|^2ds\\
\le&{}
E|\bar X_0^{x_n}-\bar X_0^x|^2
+
C(L)\int_0^tE|X^{\mu_n}_s-X^{\mu}_s|^2+ W_2(\mathcal{L}_{X^{\mu_n}_s},\mathcal{L}_{X^{\mu}_s})^2ds.
\end{align*}
Applying Gronwall's inequality and inequality \eqref{Lem4.2}, we get
\begin{align*}
E|\bar X_t^{x_n}-\bar X_t^x|^2
\le&{}
\bigg(E|\bar X_0^{x_n}-\bar X_0^x|^2+C(L)\int_0^tW_2(\mathcal{L}_{X^{\mu_n}_s},\mathcal{L}_{X^{\mu}_s})^2ds\bigg)\cdot e^{C(L)t}\\
\le&{}
\bigg(|x_n-x|+C(L)(e^{C(L)t}-1)W_2(\mu_n,\mu)^2\bigg)\cdot e^{C(L)t}.
\end{align*}

Therefore, let $n\to\infty$ in the above inequality, it follows that $\bar X_t^{x_n}$ converges to $\bar X_t^x$ in $L^2$.
By Skorokhod's representation theorem, there are two random variables $\tilde X_t^{x_n}$ and $\tilde X_t^x$
on some probability space $(\tilde \Omega, \tilde {\mathcal F},\tilde P)$
such that $\mathcal L_{\bar X_t^{x_n}}=\mathcal L_{\tilde X_t^{x_n}}$, $\mathcal L_{\bar X_t^{x}}=\mathcal L_{\tilde X_t^{x}}$, and
$$\tilde X_t^{x_n}\to \tilde X_t^x\quad a.s.$$
Thus, by Lebesgue dominated convergence, we have
\begin{align*}
P_tf(x_n,\mu_n)-P_tf(x,\mu)
=&{}
Ef(\bar X_t^{x_n},\mathcal L_{X^{\mu_n}_t})-Ef(\bar X_t^{x},\mathcal L_{X^{\mu}_t})\\
=&{}
Ef(\tilde X_t^{x_n},\mathcal L_{X^{\mu_n}_t})-Ef(\tilde X_t^{x},\mathcal L_{X^{\mu}_t})
\to 0 \quad \hbox{ as} \quad n\to\infty,
\end{align*}
for any $f\in C_b(\mathbb R^d\times\mathcal P_2(\mathbb R^d))$.
The proof is complete.
\end{proof}

Next we divide this section into two parts: ergodicity in the weak topology sense and exponential ergodicity under Wasserstein quasi-distance. Assume that the coefficients are independent of $t$ in the remaining parts of this section.

\subsection{Ergodicity in the weak topology sense}
In this subsection, we consider a special case in equations \eqref{MVSDE1}:
\begin{align}\label{Main4.1}
dX_t= b(X_t,\mathcal{L}_{X_t})dt+ \sigma(X_t,\mathcal{L}_{X_t})dW_t.
\end{align}
\begin{enumerate}
\item [(H6)](Lyapunov condition)
There exists a nonnegative function $V\in C^{2,(1,1)}(\mathbb R^d \times \mathcal P_2(\mathbb R^d))$ such that there is a constant $\gamma\ge 0$ satisfying
\begin{align*}
\mathcal LV(x,\mu)  &\leq{} -\gamma, \\
V_R&:={} \inf_{|x|\vee|\mu|_2\geq R}V(x, \mu)\to \infty ~as~ R\to \infty,
\end{align*}
for all $(x,\mu)\in \mathbb R^d \times \mathcal P_2(\mathbb R^d)$,
where $|\mu|_2^2:=\int_{\mathbb R^d}|x|^2\mu(dx)$.
\end{enumerate}
\begin{rem}
Since the condition (H6) is stronger than (H2), there exists a unique solution
with a finite second-order moment under conditions (H1), (H3)--(H6).
\end{rem}

\begin{thm}\label{Thm2}
Assume that (H1), (H3)-(H6) holds, semigroup $P_t$ is Feller,
and that there is a function $C:\mathbb R^+\to\mathbb R^+$ satisfying $\sup_{t\in\mathbb R^+}C(t)<\infty$ such that
\begin{align*}
\omega(\tilde P_t^*\mu,\tilde P_t^*\nu)
\le{}
C(t)\omega(\mu,\nu) \quad \hbox{for }\mu,\nu\in\mathcal P_2(\mathbb R^d),
\end{align*}
where $\tilde P_t^*\mu$ denotes the distribution at time $t$ with initial distribution $\mu$.
Then there exists at least one invariant measure.
Moreover, if there exists a constant $t_0>0$ such that $C(t_0)<1$,
there is a unique invariant measure.
\end{thm}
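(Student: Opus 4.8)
The plan is to obtain existence through the Krylov--Bogolioubov theorem applied to the linear semigroup $P_t^*$ on $\mathbb R^d\times\mathcal P_2(\mathbb R^d)$, using the weak topology on the measure variable, and to obtain uniqueness by exploiting the assumed contraction of the nonlinear flow $\tilde P_t^*$ in the L\'evy--Prohorov distance $\omega$.

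For existence, since $P_t$ is assumed Feller, it suffices to produce one tight family of time averages. Fix $z_0=(x_0,\mu_0)$ and set $R_T:=\frac1T\int_0^T P_t^*\delta_{z_0}\,dt$. First I would apply It\^o's formula \eqref{ItoCouple} to $V$ along the coupled process $(\bar X_t,\mathcal L_{X_t})$ started at $z_0$; stopping at $\tau_R:=\inf\{t:|\bar X_t|\ge R\}$ to make the stochastic integral a genuine martingale and then passing to the limit by Fatou's lemma, the bound $\mathcal LV\le-\gamma\le0$ of (H6) yields the supermartingale estimate $EV(\bar X_t,\mathcal L_{X_t})\le V(z_0)$. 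Consequently $\int V\,dR_T=\frac1T\int_0^T (P_tV)(z_0)\,dt\le V(z_0)$ uniformly in $T$. The coercivity $V_R=\inf_{|x|\vee|\mu|_2\ge R}V\to\infty$ then forces $R_T$ to charge the sublevel sets $\{|x|\vee|\mu|_2\le R\}$ with mass close to $1$ uniformly in $T$; since a uniform bound on $|\mu|_2$ makes the corresponding set of laws uniformly tight by Markov's inequality and hence weakly relatively compact, the family $\{R_T\}$ is tight. Any weak limit point $\pi$ of a sequence $R_{T_n}$ is then $P_t^*$--invariant by the Feller property, which proves the first assertion.

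For uniqueness under $C(t_0)<1$, let $\pi$ be any invariant measure of $P_t^*$ and let $m:=\Pi_2\pi\in\mathcal P(\mathcal P_2(\mathbb R^d))$ be its second marginal. From the explicit form of $P_t^*$ the second marginal evolves by the pushforward of the nonlinear flow, so invariance gives $m=(\tilde P_t^*)_\#m$ for all $t$. I would test this fixed-point identity against the functional $I(m):=\iint\omega(\mu,\nu)\,m(d\mu)m(d\nu)$, which is finite since $\omega\le1$; the contraction hypothesis gives $I(m)=\iint\omega(\tilde P_{t_0}^*\mu,\tilde P_{t_0}^*\nu)\,m(d\mu)m(d\nu)\le C(t_0)I(m)$, whence $I(m)=0$ and $m=\delta_{\bar\mu}$ for a single measure $\bar\mu$. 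The same contraction shows $\bar\mu$ is the unique fixed point of $\tilde P_{t_0}^*$, and via the semigroup property of $\tilde P_t^*$, of the whole flow, so the second marginal of every invariant measure equals the same $\delta_{\bar\mu}$, independent of $\pi$. Hence $\pi=\rho\otimes\delta_{\bar\mu}$ with $\rho\in\mathcal P(\mathbb R^d)$.

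The main obstacle I expect is to pin down the spatial marginal $\rho$. Testing invariance against functions $f(y,\nu)=g(y)$ shows that $\rho$ must be invariant for the frozen diffusion $d\bar X_t=b(\bar X_t,\bar\mu)\,dt+\sigma(\bar X_t,\bar\mu)\,dW_t$ obtained by replacing $\mathcal L_{X_t}$ by the fixed point $\bar\mu$, and $\bar\mu$ is one such invariant measure. The contraction of $\tilde P_t^*$ does not, by itself, single out a unique invariant law for this linear diffusion, so this is the step that will require the most work. I would try to close it by showing that $\mathcal L_{\bar X_t^{x,\mu}}$ converges as $t\to\infty$ to a limit independent of the starting pair $(x,\mu)$: once $\tilde P_t^*\mu\to\bar\mu$ in $\omega$, the coefficients driving $\bar X_t$ converge to the frozen ones, and a Gronwall and coupling estimate in the spirit of Lemma \ref{Lem1}, together with the Lyapunov control of moments from (H6), should force $\mathcal L_{\bar X_t^{x,\mu}}\to\bar\mu$. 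This would give $P_{nt_0}^*\delta_{(x,\mu)}\to\bar\mu\otimes\delta_{\bar\mu}$ for every $(x,\mu)$, and integrating against any invariant $\pi=P_{nt_0}^*\pi$ would identify $\pi=\bar\mu\otimes\delta_{\bar\mu}$. If instead only uniqueness of the MVSDE's invariant law is intended, the contraction argument of the previous paragraph already delivers the unique $\bar\mu$.
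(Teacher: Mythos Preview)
Your Krylov--Bogolioubov argument on the product space is essentially the paper's route, but you have misidentified the object whose invariant measure is claimed. In this paper an invariant measure is a fixed point of the nonlinear flow $\tilde P_t^*$ on $\mathcal P_2(\mathbb R^d)$ (Section~2), not a $P_t^*$--invariant probability on $\mathbb R^d\times\mathcal P_2(\mathbb R^d)$. After obtaining a $P_t^*$--invariant $\pi$ (the paper does the same, starting instead from $\pi_t=\mathcal L_{X_t}\times\delta_{\{\mathcal L_{X_t}\}}$ with $\mathcal L_{X_0}=\mu_0$), there remains the step of showing that the spatial marginal $\pi(\cdot\times\mathcal P_2(\mathbb R^d))$ is a fixed point of $\tilde P_t^*$. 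This is precisely where the hypothesis $\sup_t C(t)<\infty$ is used, and your proposal never invokes it: since $\pi_t(\cdot\times\mathcal P_2)=\tilde P_t^*\mu_0$, weak subsequential convergence of these marginals to $\pi(\cdot\times\mathcal P_2)$ combined with the uniform $\omega$--Lipschitz bound on $\tilde P_s^*$ yields $\omega\big(\tilde P_s^*\pi(\cdot\times\mathcal P_2),\,\pi(\cdot\times\mathcal P_2)\big)=0$ via a triangle inequality. Without this descent step your existence argument stops at a $P_t^*$--invariant measure, which is not the stated conclusion.

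For uniqueness the misreading cuts the other way. The theorem asserts only uniqueness of the fixed point of $\tilde P_t^*$, and your final sentence is exactly the paper's proof: if $\mu,\nu$ are both fixed then $\omega(\mu,\nu)=\omega(\tilde P_{t_0}^*\mu,\tilde P_{t_0}^*\nu)\le C(t_0)\,\omega(\mu,\nu)$ forces $\mu=\nu$. Your longer argument via the second marginal $m$, and the obstacle you flag about the frozen diffusion, are aimed at uniqueness of $P_t^*$--invariant measures on the product space; that is a genuinely harder statement and is not what the theorem claims.
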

\begin{proof}
By It\^o's formula and (H6), we have
\begin{align*}
EV(X_t, \mathcal L_{X_t})
& ={}
EV(X_0, \mathcal L_{X_0})+ E\int_0^t \mathcal LV(X_s, \mathcal L_{X_s})ds\\
& \le{}
EV(X_0, \mathcal L_{X_0})- \gamma t.
\end{align*}
Denote $\pi_t:= \mathcal L_{X_t}\times \delta_{\{\mathcal L_{X_t}\}}$. Note that we have
\begin{align*}
EV(X_t, \mathcal L_{X_t})
={} &
\int_{\mathbb R^d\times \mathcal P(\mathbb R^d)}V(x,\mu)\mathcal L_{X_t}(dx)\times \delta_{\{\mathcal L_{X_t}\}}(d\mu)\\
\ge{} &
\int_{\{(x,\mu): V(x,\mu)>N\}}V(x,\mu)\pi_t(dx,d\mu)\\
\ge{} &
N\cdot\pi_t\{(x,\mu): V(x,\mu)>N\}.
\end{align*}
Moreover, by (H6) there is a constant $R>0$ such that
$$\{(x,\mu): V(x,\mu)\le N\}=\{(x,\mu): |x|\vee|\mu|_2\le R\},$$
which means that $\{(x,\mu): V(x,\mu)\le N\}$ is a compact set in $\mathbb R^d\times \mathcal P_2(\mathbb R^d)$.
Thus, $\{\pi_t\}$ is tight.
Then by the Krylov-Bogolioubov theorem, there exists an invariant measure $\pi$ for $(P_t^*)_{t\ge 0}$.
Moreover, $\pi(\mathcal P_2(\mathbb R^d))(\cdot):=\pi(\cdot\times\mathcal P_2(\mathbb R^d))$ is an invariant measure for equation \eqref{Main4.1}.

Indeed, we have
\begin{align*}
\tilde P_s^*\mathcal L_{X_t}=\mathcal L_{X_{t+s}}= (P_s^*\pi_t)(\mathcal P_2(\mathbb R^d))= \pi_{t+s}(\mathcal P_2(\mathbb R^d))
\end{align*}
and
\begin{align*}
\int_{\mathbb R^d}f(x)d\pi_{t}(x, \mathcal P_2(\mathbb R^d))
& =
\int_{\mathbb R^d\times \mathcal P_2(\mathbb R^d)}f(x)\mathbb I_{\mathcal P_2(\mathbb R^d)}(\mu)\pi_{t}(dx,d\mu)\\
& \to
\int_{\mathbb R^d\times \mathcal P_2(\mathbb R^d)}f(x)\mathbb I_{\mathcal P_2(\mathbb R^d)}(\mu)\pi(dx,d\mu)\\
& =
\int_{\mathbb R^d}f(x)\pi(dx,\mathcal P_2(\mathbb R^d)), ~~as ~t\to \infty
\end{align*}
for any $f\in C_b(\mathbb R^d)$;
i.e. $\pi_{t}(\mathcal P_2(\mathbb R^d))$ converges weakly to $\pi(\mathcal P_2(\mathbb R^d))$ as $t\to\infty$.
Thus, we get for $s\ge 0$,
\begin{align*}
&\omega(\tilde P_s^*\pi(\mathcal P_2(\mathbb R^d)),\pi(\mathcal P_2(\mathbb R^d)))\\
\le{} &
\lim_{t\to\infty}\omega(\tilde P_s^*\pi(\mathcal P_2(\mathbb R^d)),\tilde P_s^*\pi_t(\mathcal P_2(\mathbb R^d)))
 +
\lim_{t\to\infty}\omega(\tilde P_s^*\pi_t(\mathcal P_2(\mathbb R^d)),\pi_t(\mathcal P_2(\mathbb R^d)))\\
& +
\lim_{t\to\infty}\omega(\pi_t(\mathcal P_2(\mathbb R^d)),\pi(\mathcal P_2(\mathbb R^d)))\\
\le{} &
\lim_{t\to\infty}C(s)\omega(\pi(\mathcal P_2(\mathbb R^d)),\pi_{t}(\mathcal P_2(\mathbb R^d)))
+\lim_{t\to\infty}\omega(\pi_{t+s}(\mathcal P_2(\mathbb R^d)),\pi_t(\mathcal P_2(\mathbb R^d)))\\
={}&0.
\end{align*}
That is, $\pi(\mathcal P_2(\mathbb R^d))$ is an invariant measure.

If there exist two invariant measures $\mu,\nu$, we have
\begin{align*}
\omega(\mu,\nu)
={}
\omega(\tilde P_{t_0}^*\mu,\tilde P_{t_0}^*\nu)
\le{}
C(t_0)\omega(\mu,\nu).
\end{align*}
This ensures that $\mu=\nu$.
The proof is complete.
\end{proof}
\begin{rem}
\begin{enumerate}
\item The conditions `$\sup_{t\in\mathbb R^+}C(t)<\infty$' and `$C(t_0)<1$ for some $t_0>0$'
imply that there exists a contraction for $\tilde P_t^*$
as long as $t$ is sufficiently large.

\item The Krylov-Bogolioubov theorem is valid
since the corresponding Fokker-Planck equation on $\mathbb R^d\times \mathcal P_2(\mathbb R^d)$ is linear,
which allows us to establish ergodicity for MVSDEs.
\item Liu and Ma \cite{LM} proved ergodicity under integrable Lyapunov conditions, which is independent of distribution variable. Wang \cite{Wang_18} showed ergodicity under monotone conditions. Note that the Krylov-Bogolioubov theorem is invalid in their situation since the corresponding Fokker-Planck equation is nonlinear.
\end{enumerate}
\end{rem}

\subsection{Exponential ergodicity under Wasserstein quasi-distance}
Unlike the above results, we get exponential ergodicity for the following coupled SDEs under the Lyapunov condition
in which the Lyapunov function depends only on space variable
\begin{align}
\left\{
\begin{aligned}\label{MVSDE1}
&dX_t= b(X_t,\mathcal{L}_{X_t})dt+ \sigma(X_t,\mathcal{L}_{X_t})dW_t,~\mathcal L_{X_0}=\mu,\\
&d\bar X_t= \bar b(\bar X_t,\mathcal{L}_{X_t})dt+ \bar \sigma(\bar X_t,\mathcal{L}_{X_t})dW_t,~\bar X_0=x.
\end{aligned}
\right.
\end{align}
Before giving the main results in this subsection, we define Wasserstein quasi-distance $W_V,W_{V,V}$ as follows:
\begin{align*}
W_V(\mu,\nu)&:=\inf_{\pi\in\mathcal C(\mu,\nu)}\int V(x-y)\pi(dx,dy),\\
W_{V,V}(\pi_1,\pi_2)&:=\inf_{\pi\in\mathcal C(\pi_1,\pi_2)}\int \big[V(x-y)+W_V(\mu,\nu)\big]\pi\big((dx,d\mu),(dy,d\nu)\big),
\end{align*}
where $V$ is a nonnegative function defined on $\mathbb R^d$ satisfying $V(x)=0$ if and only if $x= 0$,
$\mathcal C(\mu,\nu)$ denotes the set of all couplings of $\mu,\nu$,
and $\mathcal C(\pi_1,\pi_2)$ denotes the set of all couplings of $\pi_1,\pi_2$.
Note that $W_V$ is generally not a distance since the triangle inequality may not hold.
However, it is complete in $\mathcal P(\mathbb R^d)$,
i.e. any Cauchy sequence $\{\pi_n\}\subset\mathcal P(\mathbb R^d)$
is convergent in $W_V$.
Moreover, if $(x,y)\mapsto V(x,y)$ is a distance, $W_V$ is a distance.
Furthermore, $W_{V, V}$ has the similar property with
$\mathcal P(\mathbb R^d),W_V$ replaced by $\mathcal P(\mathbb R^d\times \mathcal P_2(\mathbb R^d)),W_{V,V}$ respectively.
\begin{thm}\label{InM2}
Assume that (H1)--(H5) hold, and that there exists a nonnegative function $V\in C^{2}(\mathbb R^d)$
satisfying $V(x)=0$ if and only if $x= 0$
such that there are constants $\gamma,\bar \gamma, \beta, \bar \beta\ge 0$ with $\gamma>\beta,\bar\gamma>\bar\beta$ satisfying
\begin{align*}
(L V)(x,y,\mu,\nu)\le{} -\gamma V(x-y)+\beta W_V(\mu,\nu),\\
(\bar L V)(x,y,\mu,\nu)\le{} -\bar\gamma V(x-y)+\bar\beta W_{V}(\mu,\nu),
\end{align*}
where $LV$ is defined by
\begin{align*}
(L V)(x,y,\mu,\nu):=&(b(x, \mu)-b(t,y, \nu))\partial V(x-y)
 + \frac{1}{2}tr(\partial^2 V(x-y) A(x, y, \mu, \nu)),
\end{align*}
with $A(t,x, y, \mu, \nu)=(\sigma(x, \mu)-\sigma(y,\nu))(\sigma(x, \mu)-\sigma(y,\nu))^\top$,
and $\bar LV$ is defined similar to $LV$ in which $b,\sigma$ are replaced by $\bar b,\bar \sigma$ respectively.
Assume further that there exists $\nu_0 \in \mathcal P _{V}:=\{\mu\in\mathcal P(\mathbb R^d):\mu(V)<\infty\}$ such that
\begin{align}\label{bounded}
\sup_{t\geq0} W_{V}\left( P_t^*\nu_0, \nu_0 \right)<\infty.
\end{align}
Then there exists a unique invariant measure $\nu_{\mathcal I}\times\delta_{\{\mu_{\mathcal I}\}}$ for $P_t^*$ satisfying
\begin{align*}
&W_{V, V}(P_t^*(\nu\times\delta_{\{\mu\}}),\nu_{\mathcal I}\times\delta_{\{\mu_{\mathcal I}\}})\\
\le{}&
W_V(\nu,\nu_{\mathcal I})e^{-\bar\gamma t}
+
W_V(\mu,\mu_{\mathcal I})\bigg(e^{-(\gamma-\beta) t}+ \frac {\bar \beta}{\bar\gamma+\beta-\gamma}(e^{(\beta-\gamma)t}-e^{\bar\gamma t})\bigg)
\end{align*}
for $\mu,\nu\in\mathcal P_V$,
where if $\bar\gamma+\beta-\gamma=0$,
$$\frac {\bar \beta(e^{(\bar\gamma+\beta-\gamma)t}-1)}{\bar\gamma+\beta-\gamma}\cdot e^{-\bar\gamma t}
:=
\bar \beta te^{-\bar\gamma t}.$$
\end{thm}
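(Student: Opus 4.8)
The plan is to couple the two lines of the system \eqref{MVSDE1} synchronously (driving them by the same Brownian motion $W$), to convert the dissipativity hypotheses on $LV$ and $\bar LV$ into scalar differential inequalities for $t\mapsto EV(\cdot)$, and then to read off both the invariant measures and the rate from Gronwall-type estimates; because $W_V$ is only a quasi-distance, I replace Banach's fixed-point theorem by the completeness of $W_V$ and $W_{V,V}$ recalled above. First, for the distribution component, let $X^\mu_t,X^\nu_t$ solve the first line with $(X_0^\mu,X_0^\nu)$ an optimal $W_V$-coupling of $\mu,\nu$. Since $V\in C^2(\mathbb R^d)$, Itô's formula applied to $V(X^\mu_t-X^\nu_t)$ yields a drift equal to $(LV)(X^\mu_t,X^\nu_t,\mathcal L_{X^\mu_t},\mathcal L_{X^\nu_t})$ and a mean-zero martingale. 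Taking expectations, using the first hypothesis and the coupling bound $W_V(\mathcal L_{X^\mu_t},\mathcal L_{X^\nu_t})\le EV(X^\mu_t-X^\nu_t)$, I get $\frac{d}{dt}EV(X^\mu_t-X^\nu_t)\le-(\gamma-\beta)EV(X^\mu_t-X^\nu_t)$, whence by Gronwall and minimisation over the initial coupling
\[ W_V(\mathcal L_{X^\mu_t},\mathcal L_{X^\nu_t})\le e^{-(\gamma-\beta)t}W_V(\mu,\nu). \]
Combined with \eqref{bounded}, the flow identity $\mathcal L_{X^{\nu_0}_{t+s}}=\mathcal L_{X_t^{\mathcal L_{X_s^{\nu_0}}}}$ and completeness of $W_V$, this makes $\{\mathcal L_{X^{\nu_0}_t}\}_t$ a Cauchy net and produces an invariant law $\mu_{\mathcal I}\in\mathcal P_V$.

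Next, with $\mu_{\mathcal I}$ fixed (so $\mathcal L_{X_t^{\mu_{\mathcal I}}}=\mu_{\mathcal I}$), I drive $\bar X_t$ (law $\nu$ initially, forced by $\mathcal L_{X_t^\mu}$) synchronously against the stationary process $\bar X_t^{\mathcal I}$ forced by $\mu_{\mathcal I}$ with law $\nu_{\mathcal I}$. Writing $u(t):=EV(\bar X_t-\bar X_t^{\mathcal I})$, Itô's formula and the hypothesis on $\bar LV$ give, after inserting the decay of the first step (using $\mathcal L_{X_t^{\mu_{\mathcal I}}}=\mu_{\mathcal I}$),
\[ u'(t)\le-\bar\gamma\,u(t)+\bar\beta\,e^{-(\gamma-\beta)t}W_V(\mu,\mu_{\mathcal I}). \]
Solving with integrating factor $e^{\bar\gamma t}$ and $u(0)=W_V(\nu,\nu_{\mathcal I})$ yields
\[ u(t)\le e^{-\bar\gamma t}W_V(\nu,\nu_{\mathcal I})+\frac{\bar\beta}{\bar\gamma+\beta-\gamma}\bigl(e^{(\beta-\gamma)t}-e^{-\bar\gamma t}\bigr)W_V(\mu,\mu_{\mathcal I}), \]
the exponent $\bar\gamma+\beta-\gamma=0$ being covered by the limiting normalisation $\bar\beta t e^{-\bar\gamma t}$ recorded in the statement. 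Existence of $\nu_{\mathcal I}$ comes along the way: with the forcing frozen at $\mu_{\mathcal I}$ the $\bar X$-equation is a classical SDE for which the same coupling gives pure $W_V$-contraction at rate $\bar\gamma$, so a boundedness argument analogous to \eqref{bounded} together with completeness furnishes the fixed point.

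I then assemble the two components. Since both second marginals are Dirac masses, any coupling of $P_t^*(\nu\times\delta_{\{\mu\}})=\mathcal L_{\bar X_t}\times\delta_{\{\mathcal L_{X_t^\mu}\}}$ with $\nu_{\mathcal I}\times\delta_{\{\mu_{\mathcal I}\}}$ fixes the measure coordinates to $\mathcal L_{X_t^\mu}$ and $\mu_{\mathcal I}$, so $W_{V,V}$ separates as $W_V(\mathcal L_{\bar X_t},\nu_{\mathcal I})+W_V(\mathcal L_{X_t^\mu},\mu_{\mathcal I})$. Bounding the first summand by $u(t)$ and the second by $e^{(\beta-\gamma)t}W_V(\mu,\mu_{\mathcal I})$ gives exactly the claimed estimate. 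As $\gamma>\beta$ and $\bar\gamma>0$ force every exponential on the right to vanish, $P_t^*(\nu\times\delta_{\{\mu\}})\to\nu_{\mathcal I}\times\delta_{\{\mu_{\mathcal I}\}}$ as $t\to\infty$; convergence from an arbitrary product datum simultaneously yields uniqueness of the invariant measure.

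The main obstacle is that $W_V$ and $W_{V,V}$ are genuine quasi-distances: the triangle inequality may fail, so no contraction-mapping argument is available and I must instead manufacture Cauchy nets and lean on the completeness recalled above, both to produce $\mu_{\mathcal I},\nu_{\mathcal I}$ and to pass to the limit. A second, subtler point is the order of the two estimates: the $\bar X$-inequality is an honestly solvable \emph{linear} ODE only because the McKean--Vlasov feedback has already been contracted in the first step, so the $X$-contraction must be established and substituted into the $\bar LV$-forcing term before the state component can be closed; reversing the order would leave a nonlinear, non-closed inequality.
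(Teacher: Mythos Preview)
Your proposal is correct and follows essentially the same route as the paper: synchronous coupling, It\^o's formula on $V$ of the difference, the dissipativity hypotheses turned into scalar inequalities, Gronwall/integrating-factor, and then the product splitting of $W_{V,V}$. The only cosmetic difference is that the paper outsources the first contraction step and the existence of $\mu_{\mathcal I}$ (and, by analogy, of $\nu_{\mathcal I}$) to \cite[Theorem~4.1]{LM}, whereas you rederive them from the Cauchy-net/completeness argument; the subsequent estimate for $u(t)=EV(\bar X_t-\bar X_t^{\mathcal I})$ and the assembly into the $W_{V,V}$ bound are identical to the paper's computation.
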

\begin{proof}
By \cite[Theorem 4.1]{LM}, there exists a unique invariant measures $\mu_{\mathcal I}$ for the equation
\begin{equation*}
dX_t= b(X_t,\mathcal{L}_{X_t})dt+ \sigma(X_t,\mathcal{L}_{X_t})dW_t
\end{equation*}
such that
\begin{equation}\label{IN4}
W_V(\mathcal{L}_{X_t},\mu_{\mathcal I})\le e^{-(\gamma-\beta)t}W_V(\mathcal{L}_{X_0},\mu_{\mathcal I}).
\end{equation}
Similar to the proof of \cite[Theorem 4.1]{LM}, there is a unique invariant measure $\nu_{\mathcal I}$ for the SDE
\begin{equation}\label{Eq1}
d\bar X_t= \bar b(\bar X_t,\mu_{\mathcal I})dt+ \bar \sigma(\bar X_t,\mu_{\mathcal I})dW_t.
\end{equation}
By \cite[Theorem 4.1]{Villani}, there exist two random variables $\bar X_0^{\nu_{\mathcal I}},\bar X_0^{\nu}$ such that
$\mathcal L_{\bar X_0^{\nu_{\mathcal I}}}=\nu_{\mathcal I}, \mathcal L_{\bar X_0^{\nu}}=\nu$ and
$$
EV(\bar X_0^{\nu_{\mathcal I}}-\bar X_0^{\nu})=W_V(\mathcal L_{\bar X_0^{\nu_{\mathcal I}}},\mathcal L_{\bar X_0^{\nu}}).
$$
Denote the solutions of equation \eqref{Eq1} and the following SDE
with initial value $\bar X_0^{\nu_{\mathcal I}},\bar X_0^{\nu}$ by $\bar X_t^{\nu_{\mathcal I}},\bar X_t^{\nu}$ respectively
\begin{align*}
d\bar X_t= \bar b(\bar X_t,\mathcal{L}_{X_t})dt+ \bar \sigma(\bar X_t,\mathcal{L}_{X_t})dW_t.
\end{align*}
By It\^o's formula, we have
\begin{align*}
d V(\bar X_t^{\nu}-\bar X_t^{\nu_{\mathcal I}})
=&{}
\big(\bar b(\bar X_t^{\nu},\mathcal L_{X_t})-\bar b(\bar X_t^{\nu_{\mathcal I}},\mu_{\mathcal I})\big)\cdot\partial V(\bar X_t^{\nu}-\bar X_t^{\nu_{\mathcal I}})\\
&+{}
\frac 12tr\bigg(\partial^2 V(\bar X_t^{\nu}-\bar X_t^{\nu_{\mathcal I}})\big(\bar\sigma(\bar X_t^{\nu},\mathcal L_{X_t})-\bar\sigma(\bar X_t^{\nu_{\mathcal I}},\mu_{\mathcal I})\big)\big(\bar\sigma(\bar X_t^{\nu},\mathcal L_{X_t})-\bar\sigma(\bar X_t^{\nu_{\mathcal I}},\mu_{\mathcal I})\big)^{\top}\bigg)\\
&+{}
\big\langle\partial V(\bar X_t^{\nu}-\bar X_t^{\nu_{\mathcal I}}),\big(\bar\sigma(\bar X_t^{\nu},\mathcal L_{X_t})-\bar\sigma(\bar X_t^{\nu_{\mathcal I}},\mu_{\mathcal I})\big)dW_t\big\rangle\\
=&{}
(\bar LV)(\bar X_t^{\nu},\bar X_t^{\nu_{\mathcal I}},\mathcal L_{X_t},\nu_{\mathcal I})dt
+\big\langle\partial V(\bar X_t^{\nu}-\bar X_t^{\nu_{\mathcal I}}),\big(\bar\sigma(\bar X_t^{\nu},\mathcal L_{X_t})-\bar\sigma(\bar X_t^{\nu_{\mathcal I}},\mu_{\mathcal I})\big)dW_t\big\rangle.
\end{align*}
Taking expectation on both sides, we get
\begin{align*}
EV(\bar X_t^{\nu}-\bar X_t^{\nu_{\mathcal I}})
&={}
EV(\bar X_0^{\nu}-\bar X_0^{\nu_{\mathcal I}})+ E\int_0^t(\bar LV)(\bar X_s^{\nu},\bar X_s^{\nu_{\mathcal I}},\mathcal L_{X_s},\nu_{\mathcal I})ds\\
&\le{}
EV(\bar X_0^{\nu}-\bar X_0^{\nu_{\mathcal I}})+ E\int_0^t-\bar\gamma V(\bar X_s^{\nu}-\bar X_s^{\nu_{\mathcal I}})+\bar\beta W_V(\mathcal L_{X_s},\mu_{\mathcal I})ds\\
&\le{}
W_V(\nu,\nu_{\mathcal I})+E\int_0^t-\bar\gamma V(\bar X_s^{\nu}-\bar X_s^{\nu_{\mathcal I}})ds+\bar\beta\int_0^te^{-(\gamma-\beta)s} W_V(\mu,\mu_{\mathcal I})ds
\end{align*}
where the second inequality holds by estimate \eqref{IN4}.
Applying Gronwall's inequality, we obtain
\begin{align*}
EV(\bar X_t^{\nu}-\bar X_t^{\nu_{\mathcal I}})
&\le{}
\bigg(W_V(\nu,\nu_{\mathcal I})+\bar\beta\int_0^te^{-(\gamma-\beta)s}e^{\bar \gamma s} W_V(\mu,\mu_{\mathcal I})ds\bigg)\cdot e^{-\bar\gamma t}\\
&\le{}
\bigg(W_V(\nu,\nu_{\mathcal I})+ \frac {\bar \beta}{\bar\gamma+\beta-\gamma}(e^{(\bar\gamma+\beta-\gamma)t}-1)W_V(\mu,\mu_{\mathcal I})\bigg)\cdot e^{-\bar\gamma t},
\end{align*}
where if $\bar\gamma+\beta-\gamma=0$,
$$\frac {\bar \beta(e^{(\bar\gamma+\beta-\gamma)t}-1)}{\bar\gamma+\beta-\gamma}\cdot e^{-\bar\gamma t}
:=
\lim_{a\to \gamma}\frac {\bar \beta(e^{(a-\gamma)t}-1)}{a-\gamma}\cdot e^{-\bar\gamma t}
=\bar \beta te^{-\bar\gamma t}.$$
Therefore, we have
\begin{align*}
&W_{V,V}(P_t^*(\nu\times\delta_{\{\mu\}}),\nu_{\mathcal I}\times\delta_{\{\mu_{\mathcal I}\}})
={}
W_{V,V}(\mathcal L_{\bar X_t^{\nu}}\times\delta_{\{\mathcal L_{X_t^{\mu}}\}},\nu_{\mathcal I}\times\delta_{\{\mu_{\mathcal I}\}})\\
\le{}&
W_{V}(\mathcal L_{\bar X_t^{\nu}},\nu_{\mathcal I})+W_V(\mathcal L_{X_t^{\mu}},\mu_{\mathcal I})\\
\le{}&
W_V(\mu_{\mathcal I},\mu)e^{-(\gamma-\beta) t}+
\bigg(W_V(\nu,\nu_{\mathcal I})+\frac {\bar \beta}{\bar\gamma+\beta-\gamma}(e^{(\bar\gamma+\beta-\gamma)t}-1)W_V(\mu,\mu_{\mathcal I})\bigg) e^{-\bar\gamma t}\\
={}&
W_V(\nu,\nu_{\mathcal I})e^{-\bar\gamma t}
+
W_V(\mu,\mu_{\mathcal I})\bigg(e^{-(\gamma-\beta) t}+ \frac {\bar \beta}{\bar\gamma+\beta-\gamma}(e^{(\beta-\gamma)t}-e^{-\bar\gamma t})\bigg),
\end{align*}
where the first equality holds by the definition of $P_t^*$.
\end{proof}
\begin{rem}
\begin{enumerate}
\item The condition \eqref{bounded} means that there is a ``bounded orbit" in $\mathcal P_{V}$, which is necessary and natural because the system cannot have an invariant measure if any orbit is unbounded.
\item By taking $V(\cdot)= |\cdot|^2$, our result Theorem \ref{InM2} reduces to Theorem 6.1 in Ren at al. \cite{RRW}.
\item If $b=\bar b$ and $\sigma=\bar\sigma$, we have $\mu_{\mathcal I}=\nu_{\mathcal I}$.
Moreover, if $\mu=\nu$, our result Theorem \ref{InM2} reduces to Theorem 4.1 in Liu and Ma \cite{LM}.
\end{enumerate}
\end{rem}

\section{Applications}
In this section, we give some examples to illustrate our theoretical results.
\begin{exam}
For any $x\in\mathbb R,\mu\in \mathcal P(\mathbb R)$, let
\begin{equation*}
b(x,\mu)=-x\int_{\mathbb R}y^2\mu(dy), ~\sigma(x,\mu)= \sqrt 2 x.
\end{equation*}
Then the following SDE
\begin{equation*}
dX_t= b(X_t,\mathcal{L}_{X_t})dt+ \sigma(X_t,\mathcal{L}_{X_t})dW_t
\end{equation*}
has a unique solution for any given initial value $X_0\in L^2(\Omega,\mathcal F,P)$.
\end{exam}
\begin{proof}
(H1), (H3) and (H5) obviously hold. Now we prove that (H2) and (H4) hold.
For any $x\in\mathbb R, \mu\in \mathcal P(\mathbb R)$, let $V(x,\mu)=x^4+\int_{\mathbb R}y^6\mu(dy)$. Then (H4) holds and
\begin{align*}
\partial _xV(x,\mu)&={}4x^3, ~ \partial _x^2V(x,\mu)={}12x^2,\\
\partial _{\mu}V(x,\mu)(z)&={}6z^5, ~ \partial _z\partial _{\mu}V(x,\mu)(z)=30z^4.
\end{align*}
Thus, we have
\begin{align*}
LV(x,\mu)
={} &
-x\int y^2\mu(dy)\cdot 4x^3+ \frac 12 \cdot 2x^2\cdot 12x^2\\
& +
\int\bigg((-z\int y^2\mu(dy)) \cdot 6z^5+\frac 12 \cdot 2z^2\cdot 30z^4 \bigg)\mu(dz)\\
\le{} &
12x^4+ \int 30z^6 \mu(dz)
\le 30 V(x,\mu),
\end{align*}
i.e. (H2) holds.
Therefore, by Theorem \ref{Thm1}, there exists a unique solution.
\end{proof}

\begin{exam}\label{Ex5.2}
For any $x\in\mathbb R, \mu\in \mathcal P(\mathbb R)$, let
\begin{equation*}
b(x,\mu)=-3x+3\int_{\mathbb R} y\mu(dy), ~\sigma(x,\mu)= x-\int_{\mathbb R} y\mu(dy).
\end{equation*}
Then the following SDE
\begin{equation*}
dX_t= b(X_t,\mathcal{L}_{X_t})dt+ \sigma(X_t,\mathcal{L}_{X_t})dW_t
\end{equation*}
has a unique solution for any given initial value $X_0\in L^2(\Omega,\mathcal F,P)$.
Furthermore, there exists a unique invariant measure.
\end{exam}
\begin{proof}
The conditions (H1), (H3) and (H5) obviously hold.
Let $V(x,\mu)=\frac 14\big(x-\int_{\mathbb R} y\mu(dy)\big)^4$. Then it is immediately to see that (H4) holds. Note that we have
\begin{align*}
\partial _xV(x,\mu)&={}\big(x-\int y\mu(dy)\big)^3, ~ \partial _x^2V(x,\mu)={}3\big(x-\int y\mu(dy)\big)^2,\\
\partial _{\mu}V(x,\mu)(z)&={}-\big(x-\int y\mu(dy)\big)^3, ~ \partial _z\partial _{\mu}V(x,\mu)(z)=0,
\end{align*}
and
\begin{align*}
\mathcal LV(x,\mu)
=&{}
-3\big(x-\int y\mu(dy)\big)^4+\frac 32\big(x-\int y\mu(dy)\big)^4\\
&+{}
\int 3\big(z-\int y\mu(dy)\big)\big(x-\int y\mu(dy)\big)^3\mu(dz)\\
=&{}
-\frac 32\big(x-\int y\mu(dy)\big)^4
=
-\frac 32V(x,\mu).
\end{align*}
That is to say that (H2) holds.
Therefore, by Theorem \ref{Thm1} and Theorem \ref{Thm2}, we obtain the desired results.
\end{proof}
\begin{exam}
For any $x\in\mathbb R, \mu\in \mathcal P(\mathbb R)$, let
\begin{align*}
b(x,\mu)&={}-3x+3\int_{\mathbb R} y\mu(dy), ~\sigma(x,\mu)={} x-\int_{\mathbb R} y\mu(dy),\\
\bar b(x,\mu)&={}-x^3 -2\int_{\mathbb R}(x+ \frac 12 y)\mu(dy), ~\bar\sigma(x,\mu)={}\int_{\mathbb R}(x+ \frac 12 y)\mu(dy).
\end{align*}
Then the semigroup $P_t^*$ is exponentially ergodic for the SDEs:
\begin{align}
\left\{
\begin{aligned}\label{MVSDE2}
&dX_t={} b(X_t,\mathcal{L}_{X_t})dt+ \sigma(X_t,\mathcal{L}_{X_t})dW_t,~\mathcal L_{X_0}=\mu,\\
&d\bar X_t={} \bar b(\bar X_t,\mathcal{L}_{X_t})dt+ \bar \sigma(\bar X_t,\mathcal{L}_{X_t})dW_t,~\bar X_0=x.
\end{aligned}
\right.
\end{align}
\end{exam}
\begin{proof}
By Example \ref{Ex5.2} and \cite[Example 5.1]{LM}, the eqations \eqref{MVSDE2} has a unique solution, and satisfies $$(\bar LV)(x,y,\mu,\nu)\le -2V(x-y)+\frac12 W_V(\mu,\nu),$$ with $V(\cdot)=|\cdot|^2$.
We next show that $LV(x-y)\le -3V(x-y)+3 W_V(\mu,\nu)$.
By the Cauchy-Schwarz inequality and Kantorovich duality, see for instance \cite[Theorem 5.10]{Villani}, we have
\begin{align*}
(LV)(x,y,\mu,\nu)
=&{}
-6(x-y)\big(x-y-\int z\mu(dz)+\int z\nu(dz)\big)+\big(x-y-\int z\mu(dz)+\int z\nu(dz)\big)^2\\
=&{}
-5(x-y)^2-4(x-y)\big(-\int z\mu(dz)+\int z\nu(dz)\big)+ \big(-\int z\mu(dz)+\int z\nu(dz)\big)^2\\
\le&{}
-3(x-y)^2+3\big(-\int z\mu(dz)+\int z\nu(dz)\big)^2\\
\le&{}
-3V(x-y)+3W_V(\mu,\nu).
\end{align*}
The result now follows from Theorem \ref{InM2}.
\end{proof}

\section*{Acknowledgement}

This work is partially supported by NSFC Grants 11871132, 11925102, Dalian High-level Talent Innovation Project (Grant 2020RD09).

\end{document}